\newcommand{\margnote}[1]{
\ifthenelse{\boolean{shownotes}}%
{\marginpar{\raggedright\tiny\texttt{#1}}}%
{}%
}
\newcommand{\hole}[1]{
\ifthenelse{\boolean{shownotes}}%
{\begin{center} \fbox{ \rule {.25cm}{0cm}
\rule[-.1cm]{0cm}{.4cm} \parbox{.85\textwidth}{\begin{center}
\texttt{#1}\end{center}} \rule {.25cm}{0cm}}\end{center}}
{}
}
\newtheorem{thm}{Theorem}[section]
\newtheorem{prop}[thm]{Proposition}
\newtheorem{rem}[thm]{Remark}
\newtheorem{defn}[thm]{Definition}
\theoremstyle{definition}
\newcommand{\e}{\varepsilon}		       
\newcommand{\R}{\mathbb{R}}
\newcommand{\T}{\mathbb{T}}
\newcommand{\Z}{\mathbb{Z}}
\newcommand{\dive}{\mathop{\mathrm {div}}}
\newcommand{\de}{\,\mathrm{d}}
\newcommand{\uv}{u^{\nu}}
\newcommand{\omv}{\omega^{\nu}}
\numberwithin{equation}{section}
\subjclass[2020]{35Q30, 35Q31, 35Q35, 76B03}
\keywords{Incompressible Euler equations; vorticity; propagation of regularity; modulus of continuity; inviscid limit.}
\begin{document}

\title[Propagation of regularity and inviscid limit for 2D Euler]{Propagation of logarithmic regularity and inviscid limit for the 2D Euler equations}

\author[G. Ciampa]{Gennaro Ciampa}
\address[G.\ Ciampa]{DISIM - Dipartimento di Ingegneria e Scienze dell'Informazione e Matematica\\ Universit\`a  degli Studi dell'Aquila \\Via Vetoio \\ 67100 L'Aquila \\ Italy}
\email[]{\href{gciampa@}{gennaro.ciampa@univaq.it}}
\author[G. Crippa]{Gianluca Crippa}
\address[G. Crippa]{Departement Mathematik Und Informatik\\ Universit\"at Basel \\Spiegelgasse 1 \\CH-4051 Basel \\ Switzerland}
\email[]{\href{gianluca.crippa@}{gianluca.crippa@unibas.ch}}
\author[S. Spirito]{Stefano Spirito}
\address[S. Spirito]{DISIM - Dipartimento di Ingegneria e Scienze dell'Informazione e Matematica\\ Universit\`a  degli Studi dell'Aquila \\Via Vetoio \\ 67100 L'Aquila \\ Italy}
\email[]{\href{stefano.spirito@}{stefano.spirito@univaq.it}}

\begin{abstract}
The aim of this note is to study the Cauchy problem for the 2D Euler equations under very low regularity assumptions on the initial datum. We prove propagation of regularity of logarithmic order in the class of weak solutions with $L^p$ initial vorticity, provided that $p\geq 4$. We also study the inviscid limit from the 2D Navier-Stokes equations for vorticity with logarithmic regularity in the Yudovich class, showing a rate of convergence of order $|\log\nu|^{-\alpha/2}$ with $\alpha>0$.
\end{abstract}
\maketitle
\begin{center}
{\em “Dedicated to Pierangelo Marcati on the occasion of his 70th birthday"}
\end{center}
\vspace{0,5cm}
\section{Introduction}
We consider the Cauchy problem for the two-dimensional incompressible Euler equations in vorticity formulation
\begin{equation}\label{eq:vort}
\begin{cases}
\partial_t\omega+u\cdot\nabla\omega =0,\\
u(t,\cdot)=\nabla^\perp(-\Delta)^{-1}\omega(t,\cdot),\\
\omega(0,\cdot)=\omega_0,
\end{cases}
\end{equation}
where the unknowns are the vorticity $\omega:[0,T]\times\T^2\to \R$ and the velocity $u:[0,T]\times\T^2\to \R^2$, while $\omega_0:\T^2\to \R$ is a given initial datum with zero average, i.e.
$$
\int_{\T^2}\omega_0(x)\de x=0.
$$
The coupling between the velocity and the vorticity in the second line of \eqref{eq:vort} is known as the {\em Biot-Savart law}, and in particular it implies the incompressibility condition $\dive u=0$. The understanding of the well-posedness of these equations represents one of the classic problems of mathematical fluid dynamics; we refer to \cite{MB}  for an overview of the available theory.
In the two-dimensional case the Euler equations in vorticity formulation are a non-linear and non-local transport equation driven by an incompressible velocity field. Thus, a simple energy estimate for smooth solutions gives that
$$
\|\omega(t,\cdot)\|_{L^p}=\|\omega_0\|_{L^p} \qquad \forall t>0,
$$
for all $1\leq p\leq \infty$. It is therefore natural to study weak solutions of \eqref{eq:vort} in an $L^p$ framework.  We highlight from the outset that we work on the two-dimensional torus $\T^2$ to reduce technical details, but all the results can be adapted to the whole-space case $\R^2$ requiring $\omega_0\in L^1\cap L^p(\R^2)$. The existence of weak solutions with $\omega_0\in L^p$ with $p>1$ has been proved by DiPerna and Majda in \cite{DPM}, see also \cite{De} for positive measures Radon measures $\omega_0\in H^{-1}_{\rm loc}$ and \cite{VW} for $\omega_0\in L^1$. Uniqueness of weak solutions is known only in the case of bounded vorticity $\omega\in L^\infty$ (see \cite{Y}) or slightly less (see \cite{CrippaStefani, Y2}). However, the uniqueness of weak solutions in $L^p$ with $p<\infty$ is still an open and very challenging problem. Recently several non-uniqueness results have been proved (see \cite{BM, BS, BrC, MeS, VI, VII}) but the complete picture is far from being fully understood.\\

In these notes we investigate two problems related to \eqref{eq:vort}. The first one is the {\em propagation of regularity} for weak solutions of \eqref{eq:vort}. Roughly speaking, it means the following: given a Banach space $Y$ we want to understand whether the information that the initial datum $\omega_0$ belongs to $Y$ implies that the solution $\omega(t,\cdot)\in Y$ for every $t>0$. In \cite{Beirao} it is shown that if one considers Dini continuous initial data $\omega_0$, the 2D Euler equations \eqref{eq:vort} admits a unique global solution within these critical spaces. In particular, the propagation of the Dini semi-norm provides an $L^\infty$ bound on $\nabla u$ with a constant that grows exponentially in time, see also \cite{Koch}.
In the case the velocity field is not Lipschitz there is no propagation in general due to the lack of Lipschitz regularity of the flow. However, one can have a control on the loss of regularity. This is indeed the case for Yudovich's solutions for which the velocity is only log-Lipschitz.
In this regards, in \cite{BC} it has been shown that if $\omega_0\in L^\infty \cap W^{\alpha,p}$ then, for all $0<\beta<\alpha$, the unique bounded solution $\omega$ of \eqref{eq:vort} belongs to the space $W^{\beta(t),p}$ where
\begin{equation}
\beta(t):=\beta\exp\left(-\int_0^t V(\tau)\de \tau\right),\qquad
V(t):=\sup_{0<|x-x'|\leq 1}\frac{|u(t,x)-u(t,x')|}{|x-x'|(1-\log|x-x'|)}.
\end{equation}
We point out that the quantity $\int_0^t V(\tau)\de \tau$ is finite since $u$ is log-Lipschitz.  
Later on, in \cite{Cozzi} it is showed that if $\omega_0\in W^{\alpha,p}$ (with $\alpha p\leq 2$, $p>1$ and $\alpha\in(0,2)$) is a {\em continuous} function, then the unique bounded solution $\omega(t,\cdot)\in W^{\beta,p}$ for any $t\in[0,T]$ and any $0<\beta<\alpha$.
These results have been improved recently in \cite{BN MA}, where the authors showed that solutions in the Yudovich class $\omega\in L^\infty([0,T]\times\T^2)\cap C([0,T];L^1(\T^2))$ satisfy the following: let $0<\alpha\leq 1$ and $p\geq 1$, we have that
\begin{itemize}
\item[(i)] If $\omega_0\in W^{\alpha,p}(\T^2)$ then
$$
[\omega(t,\cdot)]_{W^{\alpha(t),p}}\lesssim_{\alpha,p}\|\omega_0\|_{L^\infty}+[\omega_0]_{W^{\alpha,p}},
$$
for any $t\in[0, T]$, where $\alpha(t)=\frac{\alpha}{1+C\|\omega_0\|_{L^\infty}\alpha pt}$.
\item[(ii)] If $\omega_0\in C(\T^2)\cap W^{\alpha,p}(\T^2)$ with $p>1$ then $\omega(t,\cdot)\in W^{\beta,p}(\T^2)$ for any $0<\beta<\alpha$ and any $t\in[0, T]$. When $\alpha=1$ we also have $\omega(t,\cdot)\in W^{1,p'}(\T^2)$ for any $1\leq p'<p$ and any $t\in[0, T]$.
\item[(iii)] If $\omega_0\in W^{\alpha,p}(\T^2)$ with $p>2/\alpha$ then $\omega(t,\cdot)\in W^{\alpha,p}(\T^2)$ for any $t\in[0, T]$.
\end{itemize}
In a similar fashion, in \cite{Jeong} the author constructed an example of initial datum $\omega_0\in L^\infty\cap W^{1,p}$ with $p>2$ such that the unique bounded solution $\omega$ continuously loses {\em integrability} in time, i.e. $\omega(t,\cdot)$ belongs to the Sobolev space $W^{1,p(t)}$ with $p(t)$ being a decreasing function of time. On the other hand, log-Hölder coefficients of Yudovich's solution of \eqref{eq:vort} are conserved if one assumes that the velocity is Lipschitz, see \cite{Chae-Jeong}.\\
For more irregular initial data, in \cite{CDE} the authors showed that if $\omega_0\in L^\infty\cap B^s_{p,\infty}$, for some $s>0$ and $p\geq 1$, the unique solution $\omega$ of \eqref{eq:vort} satisfies $\omega(t,\cdot)\in L^\infty\cap B^{s(t)}_{p,\infty}$ with $s(t):=s \exp(-Ct\|\omega_0\|_\infty)$. Here $B^s_{p,\infty}$ denotes the usual Besov space. Moreover, they suggest that the loss may be improved to a polynomial law as in \cite{BN MA}, see \cite[Remark 3]{CDE}. 

Our purpose is to look for a (possibly more general) class of initial data so that the regularity is propagated without any loss. The motivation comes from recent results on the propagation of regularity for the linear transport equation driven by irregular velocity fields, i.e. the equation
\begin{equation}\label{eq:te}
\begin{cases}
\partial_t\theta+b\cdot\nabla\theta=0,\\
\theta(0,\cdot)=\theta_0.
\end{cases}
\end{equation}
It is well known that if the vector field is less regular than Lipschitz, the regularity of the initial data can be immediately lost, see \cite{ACM}. We refer to \cite{BCD} for a classical reference in which the case of Besov regularity with Log-Lipschitz field is analyzed.
On the other hand, in \cite{BN} it has been shown that bounded solutions of \eqref{eq:te} propagate {\em regularity of logarithmic order} provided that $b$ is a divergence-free vector field in $L^1([0,T];W^{1,p}(\T^d))$. In detail, for any $\alpha>0$ we define $H^{\log,\alpha}$ to be the functional space
$$
H^{\log,\alpha}(\T^d):=\left\{f\in L^2(\T^d): [f]_{H^{\log,\alpha}}^2:=\int_{B_{1/3}}\int_{\T^d}\frac{|f(x+h)-f(x)|^2}{|h|^d}\frac{\de x\de h}{\log(1/|h|)^{1-\alpha}}<\infty\right\},
$$
which is a Banach space endowed with the norm $\|f\|_{H^{\log,\alpha}}^2:=\|f\|_{L^2}+[f]_{H^{\log,\alpha}}^2$.
The authors of \cite{BN} show the following bound on the $[\,\cdot\,]_{H^{\log,\alpha}}$ semi-norm
\begin{equation}\label{stima prop log}
[\theta(t,\cdot)]_{H^{\log,p}}\lesssim_{p,d}\left(\int_0^t\|\nabla b(s,\cdot)\|_{L^p}\de s\right)^\frac{p}{2}\|\theta_0\|_{L^\infty}+[\theta_0]_{H^{\log,p}},
\end{equation}
with $\alpha=p>1$. The same result has been reproduced in \cite{Meyer-Seis} using an equivalent Besov-type semi-norm and Littlewood–Paley's theory.
Going back to the Euler equations, it is immediate to check that the bound \eqref{stima prop log} holds for solutions in the Yudovich class (as pointed out in Theorem \ref{teo:propagazione yudovich} below). However, our aim is to go beyond the class of bounded solutions. To do this, we must consider a slightly bigger functional space but still with the same scaling of $H^{\log,\frac12}$, namely the space $W^2_{\log,\frac12}$ introduced in \cite{BJ, BJ2}: we will say that a function $f\in L^2(\T^d)$ belongs to $W^2_{\log,\frac12}(\T^d)$ if 
$$
\sup_{0<h\leq 1/2}\frac{1}{|\log h|^{\frac12}}\int_{\T^{d}}\int_{\T^{d}}K_h(x-y)|u(x)-u(y)|^2\de x\de y<\infty,
$$
where the kernel $K_h$ is a positive, bounded, smooth and symmetric function defined as
$$
K_h(x)=\frac{1}{(|x|+h)^d},\qquad \mbox{ for }|x|<1/2,
$$
see Section 3 for the precise definition. Then, we can prove that any solution of \eqref{eq:vort} arising from initial data $\omega_0\in L^p\cap W^2_{\log,\frac12}$ with $p\geq 4$ satisfies $\omega(t,\cdot)\in L^p\cap W^2_{\log,\frac12}$. The result is the following.
\begin{thm}\label{thm:main1}
Let $\omega_0\in L^4\cap W^2_{\log,\frac12}(\T^2)$ and let $\omega\in L^\infty([0,T]; L^4(\T^2))$ be any weak solution of \eqref{eq:vort} starting from $\omega_0$. Then, there exists a constant $C>0$ such that $\omega$ satisfies
\begin{equation}
[\omega(t,\cdot)]_{W^2_{\log,\frac12}}\leq [\omega_0]_{W^2_{\log,\frac12}}+C\sqrt{t}\|\omega_0\|_{L^2}^\frac12\|\omega_0\|_{L^4},\qquad \mbox{for any }t\in [0,T].
\end{equation}
\end{thm}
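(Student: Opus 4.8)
The strategy is to derive a differential inequality in time for the $K_h$‑weighted quadratic quantity attached to $\omega$ and then integrate it, using the conservation of the $L^p$ norms. Since the statement is for an arbitrary weak solution with $L^4$ vorticity (and uniqueness is not available), I would first justify the computations by a standard regularization: mollify the transport equation $\partial_t\omega+u\cdot\nabla\omega=0$ in space, control the arising commutator by a DiPerna–Lions type estimate (valid because $u\in L^\infty_t W^{1,4}_x$ and $\omega\in L^\infty_t L^4_x$), and pass to the limit. For $h\in(0,1/2]$ set
\[
Q_h(t):=\int_{\T^2}\int_{\T^2}K_h(x-y)\,|\omega(t,x)-\omega(t,y)|^2\,\de x\,\de y ,
\]
so that $[\omega(t,\cdot)]_{W^2_{\log,1/2}}^2=\sup_{0<h\le 1/2}Q_h(t)/|\log h|^{1/2}$. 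Differentiating in time, using that $\omega$ is transported by the divergence‑free field $u$, and integrating by parts (the term with $\dive u$ vanishes), one gets
\[
\frac{\de}{\de t}Q_h(t)=\int_{\T^2}\int_{\T^2}\bigl(u(t,x)-u(t,y)\bigr)\cdot(\nabla K_h)(x-y)\,|\omega(t,x)-\omega(t,y)|^2\,\de x\,\de y .
\]
The crux is then to prove the bound $\bigl|\tfrac{\de}{\de t}Q_h(t)\bigr|\le C\,|\log h|^{1/2}\,\|\omega(t,\cdot)\|_{L^2}\,\|\omega(t,\cdot)\|_{L^4}^{2}$, after which the theorem follows immediately: integrate in time, use $\|\omega(t,\cdot)\|_{L^p}=\|\omega_0\|_{L^p}$ for $p=2,4$, divide by $|\log h|^{1/2}$, take the supremum over $h$, and finally apply $\sqrt{a+b}\le\sqrt a+\sqrt b$ to reach $[\omega(t,\cdot)]_{W^2_{\log,1/2}}\le[\omega_0]_{W^2_{\log,1/2}}+C\sqrt t\,\|\omega_0\|_{L^2}^{1/2}\|\omega_0\|_{L^4}$.

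To obtain the differential inequality I would combine three ingredients: the elementary pointwise bound $|(\nabla K_h)(z)|\le 2(|z|+h)^{-1}K_h(z)$; the Calderón–Zygmund estimate $\|\nabla u(t,\cdot)\|_{L^4}\le C\|\omega(t,\cdot)\|_{L^4}$, which is where the hypothesis $p\ge 4$ enters (it guarantees $\nabla u\in L^4$, equivalently $u\in C^{0,1/2}$, and the Hölder exponent $1/2$ is the one matching the order of $W^2_{\log,1/2}$); and a Bresch–Jabin type commutator bound for the velocity increment, of the form $|u(x)-u(y)|\le C|x-y|\bigl(\mathcal D_{|x-y|}u(x)+\mathcal D_{|x-y|}u(y)\bigr)$ with $\mathcal D_\rho u=K_\rho*|\nabla u|$ (in the spirit of \cite{BJ,BJ2,BN}). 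One then splits the integration into the diagonal region $\{|x-y|\le h\}$ and the off‑diagonal region $\{|x-y|>h\}$. On the diagonal region the $h^{-3}$ singularity of $\nabla K_h$ is compensated by the factor $|x-y|$ from the velocity increment, since $\int_{|z|\le h}|z|\,\de z\sim h^{3}$; bounding $|\omega(x)-\omega(y)|^2\le 2|\omega(x)|^2+2|\omega(y)|^2$ and using Hölder's inequality with exponents $(4,4,2)$ (applied to $|\nabla u|$ and to two vorticity factors) yields a contribution bounded by $C\|\omega\|_{L^2}\|\omega\|_{L^4}^2$, uniformly in $h$, which is acceptable.

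The off‑diagonal region is the delicate part, and I expect extracting the correct power $|\log h|^{1/2}$ of the logarithm — rather than the $|\log h|$ that a naive estimate produces, which would be fatal after dividing by $|\log h|^{1/2}$ and letting $h\to 0$ — to be the main obstacle. The idea is to pass to the variable $z=x-y$, insert the scale‑localized velocity estimate, and use Fubini in the auxiliary convolution variable $w$ of $\mathcal D_{|x-y|}u$: the resulting geometric kernel $\int_{\T^2}K_h(z)\,|z|\,K_{|z|}(w)\,\de z$ is of size $|w|^{-1}$ near the origin, hence lies in $L^{4/3}_{\mathrm{loc}}(\R^2)$ — exactly the dual exponent of the $L^4$ integrability of $\nabla u$, which is precisely why $p=4$ is the threshold. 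Estimating the $w$‑integral by Hölder against $\nabla u\in L^4$ (together with a Cauchy–Schwarz in $z$ against $K_h(z)\,\de z$, which supplies the single surviving factor $|\log h|^{1/2}\sim\|K_h\|_{L^1}^{1/2}$) then closes the estimate with the right quantitative dependence on $\|\omega\|_{L^2}$ and $\|\omega\|_{L^4}$. Combining the two regions gives the claimed bound on $\tfrac{\de}{\de t}Q_h$ and hence the theorem.
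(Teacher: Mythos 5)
Your overall skeleton is exactly the paper's: mollify the equation, control the DiPerna--Lions commutator $r^\e=u\cdot\nabla(\omega*\rho^\e)-(u\cdot\nabla\omega)*\rho^\e\to0$ in $L^1_tL^2_x$ (possible because $\nabla u,\omega\in L^\infty_tL^4_x$), differentiate $Q_h(t)=\iint K_h(x-y)|\omega(t,x)-\omega(t,y)|^2\de x\de y$, aim at the bound $\bigl|\tfrac{\de}{\de t}Q_h\bigr|\lesssim|\log h|^{\frac12}\|\omega\|_{L^2}\|\omega\|_{L^4}^2$, integrate in time, divide by $|\log h|^{\frac12}$ and take the supremum in $h$. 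The paper, however, obtains the key bound simply by invoking the Ben Belgacem--Jabin commutator estimate (Proposition~\ref{prop:commutatore bj}, i.e.\ \cite[Proposition 13]{BJ2}, applied with $d=2$, $p=p'=2$, $a=u$, $g=\omega^\e$), which is precisely the statement that the transport term is $\leq C|\log h|^{1/2}\|\nabla u\|_{L^2}\|\omega^\e\|_{L^4}^2$. You instead attempt to reprove this estimate in a paragraph, and that is where your argument has a genuine gap. (A minor additional point: for an arbitrary weak solution the bounds $\|\omega(t)\|_{L^2}\leq\|\omega_0\|_{L^2}$, $\|\omega(t)\|_{L^4}\leq\|\omega_0\|_{L^4}$ are not free; they follow from the renormalization property of $L^p$, $p\geq2$, weak solutions proved in \cite{FMN}, which the paper cites.)

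Concretely, two steps of your off-diagonal argument fail as written. First, the pointwise velocity-increment bound $|u(x)-u(y)|\leq C|x-y|\bigl(\mathcal D_{|x-y|}u(x)+\mathcal D_{|x-y|}u(y)\bigr)$ with $\mathcal D_\rho u=K_\rho*|\nabla u|$ is false in general: if $\nabla u$ is concentrated on a ball of radius $\delta\ll\rho=|x-y|$ around $x$ with $\|\nabla u\|_{L^1}\approx\delta$, the left-hand side can be of order one while the right-hand side is of order $\delta/\rho$. The correct objects (the Bresch--Jabin functional built from increments of $u$, or a restricted maximal function of $\nabla u$) are not convolutions of $|\nabla u|$ with an explicit kernel, so your Fubini computation does not apply to them. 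Second, even taking your formula at face value, the single factor $|x-y|$ supplied by the velocity increment is used twice: once to tame $|\nabla K_h(z)|\lesssim(|z|+h)^{-1}K_h(z)$ and once again inside the ``geometric kernel'' $\int K_h(z)\,|z|\,K_{|z|}(w)\de z$. With the one factor actually available the relevant kernel is $\int K_h(z)K_{|z|}(w)\de z\approx|w|^{-2}\log(2+|w|/h)$ near $w=0$, which is not in $L^{4/3}$ --- not even in $L^1$ --- uniformly in $h$ (its integral over $|w|<1$ is of order $|\log h|^2$), so the Hölder step against $\nabla u\in L^4$ collapses; and the mechanism by which a Cauchy--Schwarz in $z$ is supposed to leave only $\|K_h\|_{L^1}^{1/2}\sim|\log h|^{1/2}$ is never specified. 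Extracting the half power rather than the full $|\log h|$ (which is what a correct version of your Hölder/Fubini bookkeeping yields, as you yourself anticipate) is exactly the non-trivial content of \cite[Proposition 13]{BJ2}; either quote that estimate --- after which your proof coincides with the paper's --- or supply an actual proof of the half-logarithmic gain, which your sketch does not.
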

Remarkably, Theorem \ref{thm:main1} holds in a class in which uniqueness is not known, but all the solutions in $L^\infty([0,T];L^4(\T^2))$ are renormalized in the sense of DiPerna-Lions \cite{DPL}, as shown in \cite{FMN}. Moreover, in addition to improving the result in terms of integrability of the initial datum $\omega_0$, the proof is substantially different from the one in \cite{BN} being more Eulerian in spirit. In particular, we do not use the Lagrangian representation of the solutions (which still holds as a consequence of the renormalization property) and we use a commutator estimate proved in \cite[Proposition 13]{BJ2}. We observe here {\em en passant} that the Littlewood-Paley approach of \cite{Meyer-Seis} could also work under the hypothesis of Theorem \ref{thm:main1}, but we preferred not to follow this route in order to make the proof as simple as possible.

Finally, we believe that Theorem \ref{thm:main1} may be generalized to a larger class of interpolation spaces which ultimately includes $H^{\log,\alpha}$ and, more in general, logarithmic Besov spaces $B^{\mathrm{log},b}_{2,q}$. A comprehensive approach to these spaces may be found in \cite{DT}. We plan to address this issue in a forthcoming paper.
\\

Our second interest is the inviscid limit from the 2D incompressible Navier-Stokes equations
\begin{equation}\label{eq:vort-ns-intro}\tag{NS}
\begin{cases}
\partial_t\omega^\nu+u^\nu\cdot\nabla\omega^\nu =\nu\Delta\omega^\nu,\\
u^\nu(t,\cdot)=\nabla^\perp(-\Delta)^{-1}\omega^\nu(t,\cdot),\\
\omega^\nu(0,\cdot)=\omega_0^\nu,
\end{cases}
\end{equation}
where $\nu> 0$ is the kinematic viscosity of the fluid, and $\omega_0^\nu$ is a zero-average initial datum (possibly depending on $\nu$) such that
$$
\omega_0^\nu\to\omega_0, \qquad\mbox{in }L^p(\T^2).
$$ 
We recall that in two dimensions solutions of the Navier-Stokes equations \eqref{eq:vort-ns-intro} are regular if the initial datum is square integrable, i.e. for $\omega_0^\nu\in L^2(\T^2)$. Here we look for rates of convergence as the viscosity $\nu\to 0$ when the spatial domain is the torus $\T^2$, so that no boundary layers have to be considered. In this regards, the case of smooth initial data $u_0\in H^s$ (with $s>2$) was analyzed by Masmoudi in \cite{Ma} who showed that
\begin{equation}
\|u^\nu(t,\cdot)-u(t,\cdot)\|_{H^{s'}}\lesssim
\begin{cases}
\nu t,\qquad &\mbox{ if }s'\leq s-2,\\
(\nu t)^{(s-s')/2}, &\mbox{ if }s-2\leq s'\leq s-1,
\end{cases}
\end{equation}
together with the implications on the vorticity side ($\omega_0\in H^s$ with $s>1$). We also mention the recent work \cite{LucaRickVV} in which the authors found a rate of convergence of order $\nu$ {\em uniform in time} for time-quasi-periodic solutions of \eqref{eq:vort} with a small time-quasi-periodic external force. \\
For Yudovich's solutions, the following rate of convergence for the velocity field was proved by Chemin in \cite{Ch}
\begin{equation}
\sup_{t\in[0,T]}\|u^\nu(t,\cdot)-u(t,\cdot)\|_{L^2}\leq (4\nu T)^{\frac12 \exp(-CT\|\omega_0\|_{L^\infty})}\|\omega_0\|_{L^2\cap L^\infty}e^{1-\exp(-CT\|\omega_0\|_{L^\infty})},\label{rate chemin}
\end{equation}
for some constant $C>0$. We also refer to \cite{S} for a $\log$ improvement of this rate.
Concerning rates of convergence for the vorticity, a similar power law rate has been showed in \cite{CDE} under the additional assumption $\omega_0\in B^{s}_{p,\infty}$ with $s>0$ and $p\geq 1$. In particular, in \cite{CDE} the authors prove the following rate 
\begin{equation}\label{rate cde}
\|\omega^\nu(t,\cdot)-\omega(t,\cdot)\|_{L^p}\lesssim (\nu t)^{\frac{s(t)}{1+s(t)}},\qquad
s(t):=\exp(-Ct\|\omega_0\|_\infty).
\end{equation}
It is important to point out that this rate has been obtained by combining losing estimates for the  Besov regularity \big(i.e. $\omega^\nu(t,\cdot)\in B^{s(t)}_{p,\infty}$\big) together with the rate for the velocity \eqref{rate chemin}. Notice that \eqref{rate chemin} and \eqref{rate cde} are not uniform in time and they actually deteriorate exponentially fast. In this regards, the improvement of the losing estimate proved in \cite{BN MA} implies that the rate \eqref{rate cde} holds with an exponent that deteriorates polynomially in time, i.e. $\tilde{s}(t)=\frac{s}{1+Ctsp}$, as pointed out in \cite[Remark 3]{CDE}.
In \cite{CCS4} we proved a related result for Yudovich's solutions but with a different approach: without requiring any kind of regularity on the initial data, we showed that it is possible to obtain the rate of convergence
\begin{equation}\label{rate nostro}
\|\omega^\nu(t,\cdot)-\omega(t,\cdot)\|_{L^p}\lesssim \max\{\phi(\nu),\nu^\beta\},
\end{equation}
where $\beta$ depends on $\|\omega_0\|_\infty,T$, and $p$, while $\phi:\R^+\to\R^+$ is a continuous function with $\phi(0)=0$.
In detail, the function $\phi$ is a modulus of continuity depending on the initial datum $\omega_0$ such that
$$
\|\omega_0(\cdot+h)-\omega_0\|_{L^p}\leq \phi(|h|),\qquad \mbox{for $|h|$ small enough}.
$$
Notice that the regularity of the initial datum is somehow encoded in the function $\phi$ and it can be made quantitative assuming some regularity on $\omega_0$.
Our purpose here is somehow between \cite{CCS4} and \cite{CDE}: we aim to prove a rate which does not deteriorate in time requiring some regularity on the initial datum. To this end, we look for a regularity class for the initial datum that is rougher than Besov and which provides an explicit form for the function $\phi$ appearing in \eqref{rate nostro}. Once again, the inviscid limit for advection-diffusion equations with Sobolev velocity fields (see \cite{BCC, BN, Meyer-Seis}) and the propagation of logarithmic regularity suggest to look for a logarithmic rate of convergence. 

Our second main result is the following. 
\begin{thm}\label{thm:main2}
Let $\omega_0\in L^\infty\cap H^{\log,\alpha}(\T^2)$ for some $\alpha>0$. Let $\omega$ and $\omega^\nu$ be, respectively, the unique bounded solutions of the Euler and Navier-Stokes equations arising from $\omega_0$. Then, there exists a constant $C>0$ depending on $\alpha,T,\|\omega_0\|_{H^{\log,\alpha}}$, and $\|\omega_0\|_{L^\infty}$ such that
\begin{equation}\label{rate log}
\sup_{t\in(0,T)}\|\omega^\nu(t,\cdot)-\omega(t,\cdot)\|_{L^2}\leq \frac{C}{|\log \nu|^{\alpha/2}}.
\end{equation}
\end{thm}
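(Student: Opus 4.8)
The strategy is to reduce Theorem~\ref{thm:main2} to the abstract rate of convergence \eqref{rate nostro} from \cite{CCS4}, by making \emph{explicit} the modulus of continuity $\phi$ of the initial datum in the present regularity class. In other words, the whole task is to show that $\omega_0\in L^\infty\cap H^{\log,\alpha}$ has an $L^2$ modulus of continuity of order $|\log|h||^{-\alpha/2}$, and then to insert this into \eqref{rate nostro} with $p=2$.

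\emph{Step 1 (modulus of continuity).} I would first prove that there are $h_0\in(0,1)$ and $C_\alpha>0$ such that
\begin{equation*}
\|f(\cdot+h)-f\|_{L^2(\T^2)}\le \frac{C_\alpha}{|\log|h||^{\alpha/2}}\bigl(\|f\|_{L^2}+[f]_{H^{\log,\alpha}}\bigr),\qquad 0<|h|\le h_0,
\end{equation*}
for every $f\in H^{\log,\alpha}(\T^2)$. The quickest route is via Plancherel: a direct computation of $\int_{B_{1/3}}|e^{ik\cdot h}-1|^2\,|h|^{-2}\log(1/|h|)^{\alpha-1}\de h$ shows that its leading order is $\log(1+|k|)^\alpha$, whence $[f]_{H^{\log,\alpha}}^2\simeq\sum_{k\neq 0}|\hat f(k)|^2\log(1+|k|)^\alpha$ (see also \cite{DT}). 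Splitting at a frequency $N\geq 2$,
\begin{equation*}
\|f(\cdot+h)-f\|_{L^2}^2\le |h|^2 N^2\|f\|_{L^2}^2+\frac{4}{\log(1+N)^\alpha}\sum_{|k|>N}|\hat f(k)|^2\log(1+|k|)^\alpha,
\end{equation*}
and the choice $N=|h|^{-1}|\log|h||^{-\alpha/2}$ balances the two contributions and gives the claim. Alternatively one can stay in physical space: $g(h):=\|f(\cdot+h)-f\|_{L^2}^2$ is comparable to its average over a ball of radius $\sim|h|$, this average is monotone in the radius, and the integrability of $\rho\mapsto\rho^{-1}\log(1/\rho)^{\alpha-1}\bar g(\rho)$ on $(0,1/3)$ forces $\bar g(r)\lesssim[f]_{H^{\log,\alpha}}^2/\log(1/r)^\alpha$.

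\emph{Step 2 (conclusion).} Since $\omega_0\in L^\infty\subset L^2$, Step~1 shows that $\omega_0$ admits the $L^2$ modulus of continuity $\phi(r):=C_\alpha\|\omega_0\|_{H^{\log,\alpha}}|\log r|^{-\alpha/2}$ for small $r$; after extending $\phi$ to a continuous function on $[0,\infty)$ with $\phi(0)=0$, the hypotheses of \eqref{rate nostro} are met with $p=2$. Hence there are $\beta,C'>0$ depending only on $\|\omega_0\|_{L^\infty}$ and $T$ with
\begin{equation*}
\sup_{t\in(0,T)}\|\omega^\nu(t,\cdot)-\omega(t,\cdot)\|_{L^2}\le C'\max\{\phi(\nu),\nu^\beta\}.
\end{equation*}
For $\nu$ small one has $\nu^\beta|\log\nu|^{\alpha/2}\to 0$, so $\nu^\beta\le\phi(\nu)$ and the maximum equals $\phi(\nu)$; for $\nu$ away from $0$ (and $<1$) the trivial bound $\|\omega^\nu-\omega\|_{L^2}\le 2\|\omega_0\|_{L^2}$ is already $\le C|\log\nu|^{-\alpha/2}$ after enlarging $C$. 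Collecting constants yields \eqref{rate log} with $C=C(\alpha,T,\|\omega_0\|_{H^{\log,\alpha}},\|\omega_0\|_{L^\infty})$.

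The one genuinely technical point is Step~1: one must pass from the doubly integrated seminorm defining $H^{\log,\alpha}$ to a pointwise-in-$h$ estimate, with the sharp exponent $\alpha/2$ (the square root of the exponent weighting the seminorm). Both arguments above are robust, but it is worth stressing that $H^{\log,\alpha}$ controls only an $L^2$-averaged modulus and not an $L^\infty$-in-$x$ one, which is exactly why the convergence in \eqref{rate log} is measured in $L^2$; everything else is a direct application of \eqref{rate nostro}.
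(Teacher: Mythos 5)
Your proposal is correct, but it follows a genuinely different route from the paper. The paper never passes through an $L^2$ translation modulus of continuity: it works directly with the Feynman--Kac representation $\omega^\nu(t,x)=\mathbb{E}[\omega_0(X^\nu_{t,0}(x))]$, splits the $x$--$\mathbb{P}$ integral according to whether $|X^\nu_{t,0}-X_{t,0}|\le\e$ or not, and on the near set invokes the pointwise difference-quotient estimate of Theorem \ref{teo:diff-quot}, $|\omega_0(x)-\omega_0(y)|\le C|\log|x-y||^{-\alpha/2}(L_\alpha\omega_0(x)+L_\alpha\omega_0(y))$, which combines with the measure-preserving property of the flows (via \eqref{norma=seminorma}) to give $C|\log\e|^{-\alpha}[\omega_0]_{H^{\log,\alpha}}^2$; the far set is handled by Chebyshev and the flow-convergence rate of Theorem \ref{convergenza flussi}, and the choice $\e=\nu^{\beta/4}$ concludes. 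You instead prove that $H^{\log,\alpha}$ embeds into the class of functions with $L^2$ modulus of continuity $\phi(r)\sim|\log r|^{-\alpha/2}$ (your Step 1, which is correct; the Fourier route via \eqref{fourier Hlog} with the cutoff $N=|h|^{-1}|\log|h||^{-\alpha/2}$ is clean, whereas your physical-space alternative needs the usual quasi-subadditivity trick to justify the ``monotone average'' step) and then feed this $\phi$ into the quantitative inviscid-limit theorem of \cite{CCS4}, quoted here as \eqref{rate nostro}, as a black box. What each approach buys: yours is shorter modulo the citation and isolates a reusable embedding-type statement, while the paper's pointwise estimate handles the $x$-dependent displacement $X^\nu_{t,0}(x)-X_{t,0}(x)$ directly, which is exactly the difficulty that in your route is hidden inside the proof of \eqref{rate nostro} (there one must pass from translations to flow-dependent displacements, e.g.\ by mollification and maximal-function arguments). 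One caveat you should state explicitly: the precise theorem in \cite{CCS4} gives the modulus evaluated at a power of $\nu$ (the summary \eqref{rate nostro} is schematic), but for a logarithmic $\phi$ this only changes the constant by a factor depending on $\beta(M,T)$, so your conclusion and the stated dependence of $C$ on $\alpha,T,\|\omega_0\|_{H^{\log,\alpha}},\|\omega_0\|_{L^\infty}$ are unaffected; similarly, your endgame (absorbing $\nu^\beta$ into $|\log\nu|^{-\alpha/2}$ for small $\nu$ and using the trivial energy bound otherwise) matches the paper's final step.
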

The strategy of the proof relies on the stochastic Lagrangian representation of solutions of the Navier-Stokes equations as in \cite{CCS4}, i.e. the solution $\omega^\nu$ has an explicit formula in terms of the stochastic flow of $u^\nu$ which is given by
$$
\omv(t,x)=\mathbb{E}[\omv_0(X_{t,0}^\nu(x))],
$$
with $\mathbb{E}$ denoting the expectation, and $X_{t,s}^\nu$ is the solution of the stochastic differential equation
$$
\begin{cases}
\de X^\nu_{t,s}(x,\xi)=u^\nu(s,X^\nu_{t,s}(x,\xi))\de s + \sqrt{2\nu} \de W_s(\xi),\hspace{0.5cm}s\in[0,t),\\
X^\nu_{t,t}(x,\xi)=x.
\end{cases}
$$
The rate \eqref{rate nostro} is then obtained exploiting the estimate on the difference quotients of functions belonging to $H^{\log,\alpha}$ (see Theorem \ref{teo:diff-quot} below) and the convergence in the zero-noise limit of the stochastic flow $X^\nu$ towards the deterministic flow of the limit solution. We remark once again that our result provides a rate of convergence for a class of initial data more irregular than the one considered in \cite{CDE}. Moreover, it is worth noting that the constant $C$ in \eqref{rate log} diverges for $T\to\infty$, but the rate of convergence is independent of $T$.

\section{Functional setting}
In this section we fix the notations and we recall some preliminary results.
We start by recalling the definitions and some properties of the spaces of functions with derivatives of logarithmic order from \cite{BN,BN20}. 
For any $\alpha>0$ we define the space
$$
H^{\log,\alpha}(\T^d):=\left\{f\in L^2(\T^d): \int_{B_{1/3}}\int_{\T^d}\frac{|f(x+h)-f(x)|^2}{|h|^d}\frac{\de x\de h}{\log(1/|h|)^{1-\alpha}}<\infty\right\},
$$
and the corresponding semi-norm
\begin{equation}\label{def:log-seminorm}
[f]_{H^{\log,\alpha}}^2:=\int_{B_{1/3}}\int_{\T^d}\frac{|f(x+h)-f(x)|^2}{|h|^d}\frac{\de x\de h}{\log(1/|h|)^{1-\alpha}}.
\end{equation}
The space $H^{\log,\alpha}(\T^d)$ is a Banach space endowed with the norm
\begin{equation}
\|f\|_{H^{\log,\alpha}}^2:=\|f\|_{L^2}^2+[f]_{H^{\log,\alpha}}^2.
\end{equation}
Using the Fourier representation, the following characterization is shown to hold in \cite{BN}:
\begin{equation}\label{fourier Hlog}
\|f\|_{H^{\log,\alpha}}^2\sim_{\alpha,d}\sum_{k\in\Z^d}\log(2+|k|)^\alpha |\hat{u}(k)|^2.
\end{equation}
We now give a precise statement for the theorem on the propagation of regularity to which we referred in the introduction, see \cite[Corollary 1.2]{BN}.
\begin{thm}\label{cor:propagation transport}
Let $b\in L^1([0,T];W^{1,p}(\T^d))$ be a divergence-free vector field for some $p>1$. Then, any solution $\theta\in L^\infty([0,T]\times\T^d)$ of \eqref{eq:te} satisfies
\begin{equation}
[\theta(t,\cdot)]_{H^{\log,p}}\lesssim_{p,d}\left(\int_0^t\|\nabla b(s,\cdot)\|_{L^p}\de s\right)^\frac{p}{2}\|\theta_0\|_{L^\infty}+[\theta_0]_{H^{\log,p}},
\end{equation}
for any $t\in[0,T]$.
\end{thm}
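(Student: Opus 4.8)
\emph{Proof strategy.} The plan is to exploit the Lagrangian structure of \eqref{eq:te}, following \cite{BN}. Since $b\in L^1([0,T];W^{1,p}(\T^d))$ is divergence-free and $p>1$, the DiPerna--Lions theory \cite{DPL} provides a unique regular Lagrangian flow $X=X(t,x)$ of $b$; it preserves the Lebesgue measure (because $\dive b=0$), and the unique bounded solution of \eqref{eq:te} is represented by the transport formula $\theta(t,X(t,z))=\theta_0(z)$. First I would rewrite the seminorm in the symmetric form
\[
[\theta(t,\cdot)]_{H^{\log,p}}^2=\int_{\T^d}\int_{\T^d}\kappa(|x-x'|)\,\mathbbm{1}_{\{|x-x'|<1/3\}}\,|\theta(t,x)-\theta(t,x')|^2\de x\de x',\qquad \kappa(r):=\frac{1}{r^d\,\log(1/r)^{1-p}},
\]
and then perform the measure-preserving change of variables $x=X(t,z)$, $x'=X(t,z')$, which gives
\[
[\theta(t,\cdot)]_{H^{\log,p}}^2=\int_{\T^d}\int_{\T^d}\kappa(|X(t,z)-X(t,z')|)\,\mathbbm{1}_{\{|X(t,z)-X(t,z')|<1/3\}}\,|\theta_0(z)-\theta_0(z')|^2\de z\de z'.
\]
This reduces everything to comparing $\kappa$ evaluated at $|X(t,z)-X(t,z')|$ with $\kappa$ evaluated at $|z-z'|$.

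The key input will be a Crippa--De Lellis-type estimate for the flow. Setting $g_s:=|\nabla b(s,\cdot)|$ and using the pointwise bound $|b(s,y)-b(s,y')|\le C\,|y-y'|\,(Mg_s(y)+Mg_s(y'))$ together with a Gr\"onwall argument along pairs of trajectories, I would obtain the two-sided distortion estimate
\[
|z-z'|\,e^{-\ell_t(z)-\ell_t(z')}\;\le\;|X(t,z)-X(t,z')|\;\le\;|z-z'|\,e^{\ell_t(z)+\ell_t(z')},\qquad \ell_t(z):=C\!\int_0^t Mg_s(X(s,z))\de s.
\]
This is the only place where $p>1$ is used: the maximal operator $M$ is bounded on $L^p$, so that $\|\ell_t\|_{L^p(\T^d)}\lesssim_{p,d}\int_0^t\|\nabla b(s,\cdot)\|_{L^p}\de s=:V(t)$ by measure preservation of $X(s,\cdot)$. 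The logarithmic weight in $H^{\log,p}$ is tailored exactly to this logarithmic distortion of distances: on the region $\{\ell_t(z)+\ell_t(z')\le1\}$, monotonicity of $\kappa$ and the elementary inequality $(\log(1/r)+1)^{p-1}\lesssim_p\log(1/r)^{p-1}$ for $r$ small give $\kappa(|X(t,z)-X(t,z')|)\lesssim_{p,d}\kappa(|z-z'|)+1$, which reproduces $[\theta_0]_{H^{\log,p}}^2+\|\theta_0\|_{L^\infty}^2$; on the complementary region I would use the crude bound $|\theta_0(z)-\theta_0(z')|^2\le 4\|\theta_0\|_{L^\infty}^2$ and exploit that this region has small measure, controlled after a dyadic decomposition in the size of $\ell_t$ by powers of $V(t)$ via Chebyshev's inequality. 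A careful bookkeeping of the amplification of $\kappa$ against this smallness should produce precisely the term $V(t)^p\|\theta_0\|_{L^\infty}^2$, yielding $[\theta(t,\cdot)]_{H^{\log,p}}^2\lesssim_{p,d}[\theta_0]_{H^{\log,p}}^2+V(t)^p\|\theta_0\|_{L^\infty}^2$; the contribution of the boundary layer $|x-x'|\in(1/10,1/3)$ is harmless since $\kappa$ is bounded there.

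The main obstacle will be exactly that last bookkeeping: $\kappa$ has a non-integrable singularity on the diagonal $x=x'$, so no multiplicative loss can be afforded there, and the decomposition must be organized in terms of the distortion variable $\ell_t$ — not merely in the distance $|z-z'|$ — tracking simultaneously how much $\kappa$ is amplified and how small the corresponding set is, so that the resulting series converges to $V(t)^p$ and not worse. I would also note that an alternative, purely harmonic-analytic route is available, and is the one of \cite{Meyer-Seis}: using the equivalence $[f]_{H^{\log,p}}^2\sim\sum_{j\ge1}j^p\|\Delta_j f\|_{L^2}^2$, one projects \eqref{eq:te} onto Littlewood--Paley blocks, uses $\frac{\de}{\de t}\|\Delta_j\theta\|_{L^2}\le\|[b\cdot\nabla,\Delta_j]\theta\|_{L^2}$ (the transport term drops because $\dive b=0$), estimates the commutator through a Bony paraproduct decomposition together with the same maximal bound on difference quotients of $b$, and extracts the $p$-th power from a discrete H\"older inequality over the dyadic blocks. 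I would present the Lagrangian argument as primary, since it is the one of \cite{BN} to which the statement refers.
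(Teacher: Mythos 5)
A preliminary remark: the paper contains no proof of this statement — it is quoted verbatim from \cite[Corollary 1.2]{BN} — so your sketch must be judged on whether it would actually close, and your general setup (DiPerna--Lions flow, measure preservation, the symmetric rewriting of the seminorm, maximal-function distortion estimates \`a la Crippa--De Lellis) is indeed the Lagrangian spirit of \cite{BN}. The genuine gap is your treatment of the term $II$, i.e.\ the region $\{\ell_t(z)+\ell_t(z')>1\}$. Because you changed variables first, the singular kernel is evaluated at $|X(t,z)-X(t,z')|$, and there you propose the crude bound $|\theta_0(z)-\theta_0(z')|^2\le 4\|\theta_0\|_{L^\infty}^2$ together with smallness of the measure of the bad set. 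This produces $+\infty$, not $V(t)^p\|\theta_0\|_{L^\infty}^2$: for every fixed $z$ with $\ell_t(z)>1$ (a set which in general has positive measure, since you only control $\|\ell_t\|_{L^p}\lesssim V(t)$) the whole slice $\{z\}\times\T^d$ lies in the bad region, and by measure preservation of $X(t,\cdot)$
\[
\int_{\T^d}\kappa\bigl(|X(t,z)-X(t,z')|\bigr)\,\mathbbm{1}_{\{|X(t,z)-X(t,z')|<1/3\}}\de z'
=\int_{B_{1/3}}\kappa(|w|)\de w
=c_d\int_0^{1/3}\frac{\log(1/r)^{p-1}}{r}\de r=+\infty .
\]
The divergence sits on the diagonal, exactly where you discarded the oscillation of $\theta_0$, so no Chebyshev/measure argument can repair it. Even off the diagonal the bookkeeping you hope for cannot close: the lower distortion bound amplifies the kernel by $e^{d(\ell_t(z)+\ell_t(z'))}$, an exponential of a quantity that is merely in $L^p$, whereas Chebyshev on the dyadic superlevel sets $\{\ell_t\sim 2^k\}$ only gives the polynomial smallness $(V(t)2^{-k})^p$; a series of the form $\sum_k e^{d\,2^{k+1}}\bigl(V(t)2^{-k}\bigr)^p$ diverges.

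The structural point is that the singular kernel must never be composed with the flow. A viable Lagrangian bookkeeping (and, in essence, the one behind \cite{BN}) keeps the seminorm in the $(x,h)$ variables, composes only $\theta_0$ with the inverse flow, and splits according to whether the distortion is small or large compared with $\log(1/|h|)$: in the large-distortion regime the crude $L^\infty$ bound becomes affordable precisely because the kernel is still $|h|^{-d}\log(1/|h|)^{p-1}$ and its integral over $\{\log(1/|h|)\lesssim \ell\}$ is $\lesssim\ell^{\,p}$, whose $x$-integral yields $V(t)^p\|\theta_0\|_{L^\infty}^2$; in the small-distortion regime the oscillation of $\theta_0$ must be retained, and even there the bookkeeping is delicate (a naive use of the pointwise difference-quotient bound of Theorem \ref{teo:diff-quot} still leaves a logarithmically divergent $h$-integral, so the seminorm has to be exploited in an averaged form). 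Alternatively one can avoid flows altogether, either via Littlewood--Paley as in \cite{Meyer-Seis} (the route you mention but do not carry out), or in the Eulerian commutator style that this paper itself uses for the analogous Theorem \ref{teo:propagazione Lp} through Proposition \ref{prop:commutatore bj}. A secondary caveat: the pointwise two-sided Gr\"onwall bound along pairs of trajectories with $Mg_s$ is only formal for DiPerna--Lions flows (the maximal-function inequality holds a.e.\ and trajectories need not avoid the exceptional set); the rigorous versions integrate the logarithmic functional first or argue by smooth approximation. That issue is standard and fixable; the estimate of $II$ is the real gap.
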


For any $f\in H^{\log,\alpha}(\T^d)$ we define for any $x\in\T^d$ the function
\begin{equation}
L_\alpha f(x):=\left(\int_{B_{1/3}}\frac{|f(x+h)-f(x)|^2}{|h|^d}\frac{\de h}{\log(1/|h|)^{1-\alpha}}\right)^{\frac12},
\end{equation}
and it follows that $L_\alpha f\in L^2(\T^d)$ with 
\begin{equation}\label{norma=seminorma}
\|L_\alpha f\|_{L^2}=[f]_{H^{\log,\alpha}}. 
\end{equation}
The following estimate on the difference quotients holds, see \cite[Theorem 1.11]{BN20}.
\begin{thm}\label{teo:diff-quot} 
Let $\alpha>0$ be fixed. For any $f\in H^{\log,\alpha}(\T^d)$ it holds
\begin{equation}
|f(x)-f(y)|\leq C(d,\alpha)\log(1/(|x-y|))^{-\alpha/2}\left(L_{\alpha}f(x)+L_{\alpha}f(y)\right),
\end{equation}
for any $x,y\in \T^d$ such that $|x-y|<1/36$.
\end{thm}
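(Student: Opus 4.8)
The plan is to estimate $|f(x)-f(y)|$ by averaging the triangle inequality $|f(x)-f(y)|\le|f(x)-f(z)|+|f(z)-f(y)|$ — valid for a.e.\ $z$, reading $f$ as its precise representative — against a probability density $\rho$ on $\T^d$ that is \emph{spread over all dyadic scales between $r:=|x-y|$ and a fixed absolute scale}, with weights tuned to produce the logarithmic gain. Precisely: I would set $N:=\lfloor\log_2(1/r)\rfloor$ (so $N\gtrsim\log(1/r)$), fix an absolute integer $k_0\ge 3$ such that the dyadic annuli $R_j:=\{z:2^{-j}\le|z-x|<2^{-j+1}\}$ satisfy $R:=\bigcup_{j=k_0}^N R_j\subseteq B(x,1/4)$ (note $N\ge k_0$ because $r<1/36$), and record the elementary facts that $R\subseteq B(x,1/3)\cap B(y,1/3)$, $|R_j|\sim_d 2^{-jd}$, and on $R_j$ one has $|z-x|\sim 2^{-j}$, $\log(1/|z-x|)\sim j$, and $|z-y|\sim 2^{-j}$ (the last one fails on the innermost annulus $R_N$, which instead lies in $B(y,5r)$). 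I would then take
\begin{equation*}
\rho:=\sum_{j=k_0}^N\frac{w_j}{|R_j|}\,\mathbf{1}_{R_j},\qquad w_j:=\Big(\sum_{i=k_0}^N i^{\alpha-1}\Big)^{-1}j^{\alpha-1},
\end{equation*}
which is nonnegative with $\int_{\T^d}\rho\,\de z=1$.

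Integrating the triangle inequality against $\rho$ and applying Cauchy--Schwarz to the first term against the weight $|z-x|^{-d}\log(1/|z-x|)^{-(1-\alpha)}$ (and to the second against the corresponding weight centred at $y$) gives, using $R\subseteq B(x,1/3)\cap B(y,1/3)$ so that $\int_R|f(z)-f(x)|^2|z-x|^{-d}\log(1/|z-x|)^{-(1-\alpha)}\,\de z\le L_\alpha f(x)^2$ and likewise at $y$,
\begin{equation*}
|f(x)-f(y)|\le L_\alpha f(x)\,A_x^{1/2}+L_\alpha f(y)\,A_y^{1/2},\qquad A_x:=\int_R\rho(z)^2|z-x|^d\log(1/|z-x|)^{1-\alpha}\,\de z,
\end{equation*}
and $A_y$ analogously. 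From the structure of $\rho$ and the pointwise estimates on $R_j$ I would compute
\begin{equation*}
A_x\sim\sum_{j=k_0}^N\frac{w_j^2}{|R_j|}\,2^{-jd}j^{1-\alpha}\sim\sum_{j=k_0}^N w_j^2\,j^{1-\alpha}=\Big(\sum_{j=k_0}^N j^{\alpha-1}\Big)^{-1}\sim_\alpha N^{-\alpha},
\end{equation*}
using $\sum_{j=k_0}^N j^{\alpha-1}\sim_\alpha N^\alpha$; the bound $A_y\lesssim_\alpha N^{-\alpha}$ is identical once the innermost annulus is handled via $R_N\subseteq B(y,5r)$, which yields $\int_{R_N}|z-y|^d\log(1/|z-y|)^{1-\alpha}\de z\lesssim 2^{-2Nd}N^{1-\alpha}$. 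Combining, $|f(x)-f(y)|\lesssim_{d,\alpha}N^{-\alpha/2}(L_\alpha f(x)+L_\alpha f(y))\lesssim_{d,\alpha}\log(1/|x-y|)^{-\alpha/2}(L_\alpha f(x)+L_\alpha f(y))$; since $L_\alpha f\in L^2(\T^d)$ (indeed $\|L_\alpha f\|_{L^2}=[f]_{H^{\log,\alpha}}$), this holds for a.e.\ pair $(x,y)$ and is trivial where $L_\alpha f=\infty$.

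The crucial point — and the reason the sharp exponent $-\alpha/2$ appears, rather than the $-(1-\alpha)/2$ that a single-scale average ($\rho$ concentrated on one annulus of radius $\sim r$) would give — is that $\rho$ is spread over $\sim N$ dyadic scales with the \emph{optimal} weights $w_j\propto j^{\alpha-1}$, which minimize $\sum_j w_j^2 j^{1-\alpha}$ subject to $\sum_j w_j=1$ and so reduce the Cauchy--Schwarz loss to $(\sum_j j^{\alpha-1})^{-1/2}\sim N^{-\alpha/2}$; everything else is just bookkeeping. The hard part, accordingly, will not be conceptual but the careful verification of the geometric facts above — the inclusion $R\subseteq B(x,1/3)\cap B(y,1/3)$ (this is exactly where the hypothesis $|x-y|<1/36$ enters), the volume estimates $|R_j|\sim_d 2^{-jd}$, the comparabilities of $|z-x|$, $|z-y|$ and the logarithms on each $R_j$ with the separate treatment of $R_N$, and tracking that all implied constants depend only on $d$ and $\alpha$ (the latter through the exponent $1-\alpha$ and through $\sum_{j=k_0}^N j^{\alpha-1}\sim_\alpha N^\alpha$).
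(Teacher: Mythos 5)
The paper does not actually prove this statement: Theorem \ref{teo:diff-quot} is quoted from \cite[Theorem 1.11]{BN20}, so there is no in-paper argument to compare yours against, and I judge the proposal on its own merits. It is correct. The average of the triangle inequality against the multi-scale density $\rho=\sum_{j=k_0}^N\frac{w_j}{|R_j|}\mathbf{1}_{R_j}$ with $w_j\propto j^{\alpha-1}$, followed by Cauchy--Schwarz against the weight defining $L_\alpha$, indeed yields $A_x\lesssim\big(\sum_{j=k_0}^N j^{\alpha-1}\big)^{-1}\sim_\alpha N^{-\alpha}$; the geometry checks out with $k_0=3$ (then $R\subseteq B(x,1/4)$, and $|x-y|<1/36$ gives both $N\ge 5\ge k_0$ and $R\subseteq B(y,1/4+1/36)\subset B(y,1/3)$, which is what lets you dominate the second Cauchy--Schwarz factors by $L_\alpha f(x)$ and $L_\alpha f(y)$); and your separate treatment of the innermost annulus in $A_y$ is exactly the point that needs care, since $|z-y|\sim 2^{-j}$ only holds for $j\le N-1$, while on $R_N\subseteq B(y,5r)$ the claimed bound $\int_{R_N}|z-y|^d\log(1/|z-y|)^{1-\alpha}\de z\lesssim_{d,\alpha}2^{-2Nd}N^{1-\alpha}$ is verified by a polar-coordinates computation valid both for $\alpha\le 1$ and $\alpha>1$. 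All implied constants depend only on $d$ and $\alpha$, and since $N\sim\log(1/|x-y|)$ the stated estimate follows; note also that, as derived, the inequality holds for every pair $x,y$ once an everywhere-defined representative of $f$ is fixed (both sides use the same pointwise values), so your closing ``a.e.'' hedge is not even needed. Your argument is of the same multi-scale nature as the original one in \cite{BN20} (a weighted Cauchy--Schwarz across the $\sim\log(1/|x-y|)$ dyadic scales with the optimal $j^{\alpha-1}$ weights), just packaged as a single average against $\rho$ rather than a telescoping of dyadic ball averages, which is a clean way to organize it. One cosmetic slip in your final remark: a single-scale average at radius $\sim r$ would produce the factor $\log(1/r)^{(1-\alpha)/2}$ (a useless, growing bound when $\alpha<1$), not $\log(1/r)^{-(1-\alpha)/2}$; this is only motivation and does not affect the proof.
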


We point out that the previous theorem is proved in \cite{BN20} in functional spaces that are a generalization of $H^{\log,\alpha}$ in a non-Hilbertian framework. Precisely, one can consider the spaces $X^{\gamma,p}$ which are defined as follows.
\begin{defn}
Let $p\in(0,\infty)$ and $\gamma\in(0,\infty)$ be fixed. We define 
\begin{equation}\label{def:log gamma-seminorm}
[f]_{X^{\gamma,p}(\T^d)}:=\left(\int_{B_{1/3}}\int_{\T^d}\frac{|f(x+h)-f(x)|^p}{|h|^d}\frac{\de x\de h}{\log(1/|h|)^{1-p\gamma}}\right)^{1/p},
\end{equation}
and we set
\begin{equation}
X^{\gamma,p}(\T^d):=\{f\in L^p(\T^d):[f]_{X^{\gamma,p}(\T^d)}<\infty\}.
\end{equation}
\end{defn}
For $p\geq 1$, the space $X^{\gamma,p}(\T^d)$ is a Banach space endowed with the norm
\begin{equation}
\|f\|_{X^{\gamma,p}}=\|f\|_{L^p}+[f]_{X^{\gamma,p}},
\end{equation}
and the Gagliardo semi-norm $[f]_{X^{\gamma,p}}$ is lower semicontinuous with respect to the strong topology of $L^p$.
With these notations, we have that $H^{\log,\alpha}=X^{\frac{\alpha}{2},2}$. Moreover, the semi-norms $[\,\cdot\,]_{H^{\log,\alpha}}$ are increasing in $\alpha$, i.e for any $0<\alpha\leq \alpha'<\infty$ it holds
$$
[f]_{H^{\log,\alpha}}\leq [f]_{H^{\log,\alpha'}},
$$
see \cite[Proposition 1.3]{BN20}.

\section{Propagation of regularity}
The aim of this section is to analyze the propagation of logarithmic regularity for weak solutions of the 2D Euler equations. For solutions in the Yudovich class the propagation follows immediately by Theorem \ref{cor:propagation transport} as shown in the following. 
\begin{thm}\label{teo:propagazione yudovich}
Let $\omega_0\in L^\infty\cap H^{\log,p}(\T^2)$ for some $p> 1$. Then, the unique solution $\omega\in L^\infty([0,T]\times\T^2)$ of \eqref{eq:vort} satisfies
\begin{equation}
[\omega(t,\cdot)]_{H^{\log,p}}\lesssim_p t^\frac{p}{2}\|\omega_0\|_{L^\infty}^{1+\frac{p}{2}}+[\omega_0]_{H^{\log,p}},\qquad \mbox{ for any }\,\,0\leq t\leq T.
\end{equation}
\end{thm}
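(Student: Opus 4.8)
The plan is to apply Theorem~\ref{cor:propagation transport} directly to $\omega$ itself, regarded as a bounded solution of the linear transport equation \eqref{eq:te} with the divergence-free drift $b=u$. Three ingredients are needed: that such a solution exists and is bounded uniformly on $[0,T]$; that $u\in L^1([0,T];W^{1,p}(\T^2))$ with a quantitative bound in terms of $\|\omega_0\|_{L^\infty}$; and that $[\omega_0]_{H^{\log,p}}<\infty$, the last being immediate from the hypothesis together with the inclusion $L^\infty(\T^2)\subset L^2(\T^2)$, which makes the $H^{\log,p}$ seminorm well defined.

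First I would invoke Yudovich's theory: since $\omega_0\in L^\infty(\T^2)$, there is a unique solution $\omega\in L^\infty([0,T]\times\T^2)$ of \eqref{eq:vort}, the velocity $u=\nabla^\perp(-\Delta)^{-1}\omega$ is log-Lipschitz in space uniformly on $[0,T]$, and $\omega$ is transported by the (Osgood-unique, measure-preserving) flow of $u$; in particular $\omega$ is a bona fide bounded solution of \eqref{eq:te} with $b=u$, and $\|\omega(t,\cdot)\|_{L^q}=\|\omega_0\|_{L^q}$ for every $q\in[1,\infty]$ and every $t\in[0,T]$, so $\|\omega(t,\cdot)\|_{L^\infty}=\|\omega_0\|_{L^\infty}$. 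Next I would estimate the drift: by the Calder\'on--Zygmund inequality applied to the singular integral operator $\nabla\nabla^\perp(-\Delta)^{-1}$, for the fixed exponent $p>1$ one has $\|\nabla u(t,\cdot)\|_{L^p}\le C_p\|\omega(t,\cdot)\|_{L^p}=C_p\|\omega_0\|_{L^p}\lesssim_p\|\omega_0\|_{L^\infty}$ on the finite-measure torus, hence $u\in L^1([0,T];W^{1,p}(\T^2))$ with
$$
\int_0^t\|\nabla u(s,\cdot)\|_{L^p}\de s\lesssim_p t\,\|\omega_0\|_{L^\infty}\qquad\text{for all }t\in[0,T].
$$

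Finally, Theorem~\ref{cor:propagation transport} (with $d=2$, drift $b=u$, transported quantity $\theta=\omega$ and $\theta_0=\omega_0$) gives
$$
[\omega(t,\cdot)]_{H^{\log,p}}\lesssim_{p}\left(\int_0^t\|\nabla u(s,\cdot)\|_{L^p}\de s\right)^{\frac{p}{2}}\|\omega_0\|_{L^\infty}+[\omega_0]_{H^{\log,p}}\lesssim_{p}\bigl(t\,\|\omega_0\|_{L^\infty}\bigr)^{\frac{p}{2}}\|\omega_0\|_{L^\infty}+[\omega_0]_{H^{\log,p}},
$$
which, after simplifying the exponent $p/2+1=1+p/2$, is precisely the claimed bound. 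As for difficulties, there is essentially none of substance: the statement is a corollary of Theorem~\ref{cor:propagation transport}, as announced in the introduction, and the only points needing (entirely standard) care are using Yudovich's well-posedness to justify the phrase "the unique bounded solution" and to certify that $\omega$ is a legitimate solution of \eqref{eq:te} in the sense of Theorem~\ref{cor:propagation transport}, plus the Calder\'on--Zygmund bound on $\nabla u$. The genuinely hard work of the paper lies instead in Theorem~\ref{thm:main1}, where one must operate outside the Yudovich class and Theorem~\ref{cor:propagation transport} is no longer available.
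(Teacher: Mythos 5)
Your proposal is correct and follows essentially the same route as the paper: apply Theorem~\ref{cor:propagation transport} to $\omega$ as a bounded solution of the transport equation with $b=u$, and bound $\int_0^t\|\nabla u(s,\cdot)\|_{L^p}\,\de s\lesssim_p t\,\|\omega_0\|_{L^\infty}$ via the Calder\'on--Zygmund estimate for the Biot--Savart law together with conservation of the $L^p$ norms of the vorticity on the finite-measure torus. The extra remarks on Yudovich well-posedness are consistent with, and merely make explicit, what the paper takes for granted.
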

\begin{proof} 
By Theorem \ref{cor:propagation transport}, for any $0\leq t\leq T$ the unique bounded solution $\omega$ satisfies
\begin{equation}\label{stima 1}
[\omega(t,\cdot)]_{H^{\log,p}}\lesssim_{p}\left(\int_0^t\|\nabla u(s,\cdot)\|_{L^p}\de s\right)^\frac{p}{2}\|\omega_0\|_{L^\infty}+[\omega_0]_{H^{\log,p}}.
\end{equation}
By the properties of the Biot-Savart operator, we have that
\begin{equation}\label{stima biot savart}
\|\nabla u(s,\cdot)\|_{L^p}\lesssim_p \|\omega(s,\cdot)\|_{L^p}\leq \|\omega_0\|_{L^\infty}, \qquad \mbox{ for any }\,\,0\leq s\leq T.
\end{equation}
where in the last inequality we used that the spatial domain is $\T^2$ together with the basic energy estimate on $\omega$.
Substituting \eqref{stima biot savart} in \eqref{stima 1} the proof is completed.
\end{proof}

We now consider the case of unbounded vorticity. To this end, we need to introduce some preliminary definitions and results from \cite{BJ, BJ2, BrJ} that we adapt to our context.
\begin{defn}
Let $0<\theta<1$ and define the semi-norms
\begin{equation}\label{seminorme jabin}
[u]_{\theta}^2:=\sup_{0<h\leq 1/2}\frac{1}{|\log h|^{\theta}}\int_{\T^{d}}\int_{\T^{d}}K_h(x-y)|u(x)-u(y)|^2\de x\de y,
\end{equation}
where the kernel $K_h$ is a positive, bounded and symmetric function defined as
\begin{equation}
K_h(x)=\frac{1}{(|x|+h)^d},\qquad \mbox{ for }|x|<1/2,
\end{equation}
independent of $h$ for $|x|\geq 2/3$, equal to a positive constant outside $B(0,3/4)$, and periodized so as to belong in $C^\infty(\T^d\setminus B(0,3/4))$.
\end{defn}
Notice that \eqref{seminorme jabin} defines a semi-norm because it vanishes if $u$ is a constant. Moreover, the semi-norms are decreasing in $\theta$, i.e.
\begin{equation}\label{monotonia seminorme}
[u]_{\theta}\leq [u]_{\theta'},\,\,\mbox{if }\theta'\leq \theta.
\end{equation}
Correspondingly, we define the space $W^2_{\log,\theta}$ as follows
\begin{equation}
W^2_{\log,\theta}:=\{u\in L^2(\T^d):[u]_{\theta}<\infty\},
\end{equation}
which is a Banach space endowed with the norm
\begin{equation}
\|u\|_{2,\theta}^2=\|u\|_{L^2}^2+[u]_{\theta}^2.
\end{equation}
The following proposition holds, see \cite[Proposition 1]{BJ2}.
\begin{prop}\label{prop:verso 1}
For any $s>0$ and any $0<\theta<1$ one has the (compact) embeddings
$$
W^{s,2}(\T^d)\subset W^2_{\log,\theta}(\T^d)\subset L^2(\T^d).
$$
In addition, using the Fourier representation it holds
\begin{equation}
\|u\|_{L^2}^2+[u]_{\theta}^2\sim \sup_h\sum_{k\in\Z^d\setminus\{0\}}\frac{1+\left|\log\left(\frac{1}{|k|}+h\right)\right|}{|\log h|^\theta}|\hat{u}(k)|^2\leq \sum_{k\in\Z^d}\log(1+|k|)^{1-\theta}|\hat{u}(k)|^2.\label{norme equivalenti upper}
\end{equation}
\end{prop}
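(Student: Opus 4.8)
The plan is to reduce the whole statement to a two-sided estimate for the Fourier multiplier attached to the bilinear form in \eqref{seminorme jabin}, and then to read off the norm equivalence \eqref{norme equivalenti upper} and the (compact) embeddings from the resulting description of $W^2_{\log,\theta}$ as a weighted $\ell^2$-space. Expanding $|u(x)-u(y)|^2$ and using that $K_h$ is real, symmetric and periodic, Parseval's identity gives
\[
\int_{\T^{d}}\int_{\T^{d}}K_h(x-y)\,|u(x)-u(y)|^2\,\de x\,\de y
=2\sum_{k\in\Z^{d}}\bigl(\widehat{K_h}(0)-\widehat{K_h}(k)\bigr)|\hat u(k)|^2
=2\sum_{k\neq0}m_h(k)\,|\hat u(k)|^2,
\]
with $m_h(k):=\widehat{K_h}(0)-\widehat{K_h}(k)=\int_{\T^{d}}K_h(x)\bigl(1-\cos(2\pi k\cdot x)\bigr)\,\de x\ge0$ (the $k=0$ term drops, which is why the semi-norm ignores constants). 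Thus $[u]_\theta^2=\sup_{0<h\le1/2}\tfrac{2}{|\log h|^\theta}\sum_{k\neq0}m_h(k)|\hat u(k)|^2$, and everything reduces to the claim that $m_h(k)\sim1+\bigl|\log(\tfrac1{|k|}+h)\bigr|$ uniformly in $k\neq0$ and $0<h\le1/2$.

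\emph{The multiplier estimate} is the heart of the matter, and the oscillatory part below is the main obstacle. For the upper bound I split $\{|x|<1/2\}$ into the ball $\{|x|<1/|k|\}$ and the annulus $\{1/|k|\le|x|<1/2\}$ (the cases $|k|=1$ or $1/|k|\ge1/2$ being trivial). On the ball, $1-\cos(2\pi k\cdot x)\lesssim|k|^2|x|^2$, and evaluating $|k|^2\int_0^{1/|k|}r^{d+1}(r+h)^{-d}\,\de r$ while distinguishing $h\le1/|k|$ from $h>1/|k|$ bounds that contribution by a constant. On the annulus, the ``non-oscillatory'' part equals $\int_{1/|k|\le|x|<1/2}(|x|+h)^{-d}\,\de x\sim\log\frac{1/2+h}{1/|k|+h}\lesssim1+|\log(\tfrac1{|k|}+h)|$, while the oscillatory part $\int_{1/|k|\le|x|<1/2}(|x|+h)^{-d}\cos(2\pi k\cdot x)\,\de x$ is $O(1)$: decomposing into dyadic annuli $|x|\sim R=2^{j}/|k|\le1/2$ and applying the standard oscillatory-integral decay (van der Corput, equivalently the decay of the Bessel function, valid since $R|k|\gtrsim1$ throughout as $R\ge1/|k|$) yields a convergent geometric series. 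For the lower bound, $K_h(x)=(|x|+h)^{-d}\ge1$ on $\{|x|<1/2\}$ and $K_h\gtrsim1$ globally, so $m_h(k)\ge c\int_{\T^{d}}(1-\cos(2\pi k\cdot x))\,\de x=c$ for every $k\neq0$; keeping only the annulus and reusing the $O(1)$ oscillatory bound gives moreover $m_h(k)\ge c'\log\frac{1/2+h}{1/|k|+h}-C$, and since $\log\frac{1/2+h}{1/|k|+h}=|\log(\tfrac1{|k|}+h)|+\log(1/2+h)$ with $\log(1/2+h)$ bounded, the maximum of the two lower bounds is $\gtrsim1+|\log(\tfrac1{|k|}+h)|$.

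\emph{Norm equivalence and the upper bound in \eqref{norme equivalenti upper}.} Inserting the multiplier estimate into the Fourier identity above yields $\|u\|_{L^2}^2+[u]_\theta^2\sim\sup_h\tfrac1{|\log h|^\theta}\sum_{k\neq0}\bigl(1+|\log(\tfrac1{|k|}+h)|\bigr)|\hat u(k)|^2$, with $\|u\|_{L^2}^2$ absorbed by the choice $h=1/2$ (up to the mean-value term $|\hat u(0)|^2$, which is immaterial on the zero-average functions for which the space is used). For the last relation, $\sup_h\sum_k\le\sum_k\sup_h$, so it suffices to show $\sup_{0<h\le1/2}\tfrac{1+|\log(\tfrac1{|k|}+h)|}{|\log h|^\theta}\lesssim\log(2+|k|)^{1-\theta}$: if $h\le1/|k|$ then $|\log h|\ge\log|k|$ controls the denominator while the numerator is $\lesssim\log(2+|k|)$; if $h>1/|k|$ then $1+|\log(\tfrac1{|k|}+h)|\lesssim1+|\log h|$ and $|\log h|<\log|k|$; in both cases the quotient is $\lesssim\log(2+|k|)^{1-\theta}$, and summing against $|\hat u(k)|^2$ gives the claimed inequality.

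\emph{The (compact) embeddings} follow from the general principle that an inclusion of weighted $\ell^2$-spaces is compact once the ratio of the weights tends to $0$: truncating to $\{|k|\le R\}$ gives a finite-dimensional (hence compact) part, and the tail $\{|k|>R\}$ is controlled by the supremum of the weight ratio over $|k|>R$, which is small. For $W^2_{\log,\theta}\subset L^2$ one uses that the $W^2_{\log,\theta}$-weight $\sup_h\tfrac{1+|\log(\tfrac1{|k|}+h)|}{|\log h|^\theta}\sim\log(2+|k|)^{1-\theta}\to\infty$ since $\theta<1$; for $W^{s,2}\subset W^2_{\log,\theta}$ one uses that $\log(2+|k|)^{1-\theta}/(1+|k|^2)^{s/2}\to0$ for every $s>0$. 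The bare inclusions need nothing more: $W^2_{\log,\theta}\subset L^2$ holds by definition, and $W^{s,2}\subset W^2_{\log,\theta}$ is the same weight comparison without the truncation.
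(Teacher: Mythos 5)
You should first note that the paper does not prove this proposition at all: it is quoted verbatim from \cite[Proposition 1]{BJ2}, so your Fourier-multiplier argument is a self-contained alternative rather than a variant of an in-paper proof. Its core is sound: the Parseval reduction to $m_h(k)=\widehat{K_h}(0)-\widehat{K_h}(k)\ge 0$, the two-sided bound $m_h(k)\sim 1+\bigl|\log\bigl(\tfrac{1}{|k|}+h\bigr)\bigr|$ (ball/annulus splitting, dyadic oscillatory estimate, global lower bound $K_h\gtrsim 1$), and the pointwise comparison $\sup_h\bigl(1+\bigl|\log\bigl(\tfrac1{|k|}+h\bigr)\bigr|\bigr)/|\log h|^\theta\lesssim\log(2+|k|)^{1-\theta}$ together give the equivalence and the last inequality in \eqref{norme equivalenti upper} up to constants; the $\hat u(0)$ caveat you flag is inherent to the statement and harmless for zero-average vorticities. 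Two details deserve a word: the region $\{|x|\ge 1/2\}$, where $K_h$ is only bounded above and below uniformly in $h$, contributes $O(1)$ to $m_h(k)$ and should be mentioned; and in $d=1$ the dyadic decay exponent $(d-1)/2$ vanishes, so one integration by parts (or smooth dyadic cutoffs, which give rapid decay in every dimension) is needed for the oscillatory part.

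There is, however, one step whose justification fails as written: the compact embedding $W^2_{\log,\theta}(\T^d)\subset L^2(\T^d)$. You invoke the weighted-$\ell^2$ principle after identifying the $W^2_{\log,\theta}$-weight with $w(k)=\sup_h\frac{1+|\log(1/|k|+h)|}{|\log h|^\theta}\sim\log(2+|k|)^{1-\theta}$. But the norm is a supremum over $h$ of weighted sums, and the interchange $\sup_h\sum_k\le\sum_k\sup_h$ only yields the upper bound $\|u\|_{2,\theta}^2\lesssim\sum_k\log(2+|k|)^{1-\theta}|\hat u(k)|^2$; the reverse bound is false in general, since otherwise $W^2_{\log,\theta}$ would coincide with $H^{\log,1-\theta}$, contradicting the strictness of the inclusion \eqref{eq:inclusione} remarked in the paper. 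The upper bound is exactly what is needed for $W^{s,2}\subset W^2_{\log,\theta}$ (embedding and compactness via truncation and $\log(2+|k|)^{1-\theta}(1+|k|^2)^{-s}\to 0$), so that half of your argument stands; but for $W^2_{\log,\theta}\subset L^2$ the weight-ratio principle requires a \emph{lower} bound of the $W^2_{\log,\theta}$-norm on high frequencies, which your claimed identification does not provide. The repair is immediate from your own Fourier identity: fix $h=1/R$ inside the supremum; then for $|k|>R$ one has $1+\bigl|\log\bigl(\tfrac1{|k|}+\tfrac1R\bigr)\bigr|\gtrsim \log R$, hence $\sum_{|k|>R}|\hat u(k)|^2\lesssim(\log R)^{\theta-1}\|u\|_{2,\theta}^2$, which tends to $0$ uniformly on bounded sets since $\theta<1$, giving equi-small tails and hence compactness. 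With this one-line correction the proof is complete.
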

In view of the Proposition \ref{prop:verso 1} and the equivalence in \eqref{fourier Hlog}, for $0<\theta<1$ we have the inclusion
\begin{equation}\label{eq:inclusione}
H^{\log,1-\theta}(\T^d)\subset W^{2}_{\log,\theta}(\T^d).
\end{equation}
We remark that the inclusion in \eqref{eq:inclusione} is strict.
We now recall the following commutator estimate from \cite{BJ2} and adapted to our context, see \cite[Proposition 13]{BJ2}.
\begin{prop}
\label{prop:commutatore bj}
Let $a\in W^{1,p}(\T^d)$ be a divergence-free vector field with $1\leq p\leq 2$. Then, there exists a constant $C>0$ depending only on $p$ and $d$ such that for all $g\in L^{p'}(\T^d)$ with $\frac{1}{p}+\frac{1}{p'}=1$,
\begin{equation}\label{stima che mi serve}
\int_{\T^{d}}\int_{\T^{d}}\nabla K_h(x-y)(a(x)-a(y))|g(x)-g(y)|^2\de x\de y\leq C|\log h|^{\frac12}\|\nabla a\|_{L^p}\|g\|_{L^{2p'}}^2.
\end{equation}
\end{prop}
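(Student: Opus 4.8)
The plan is to exploit the algebraic structure of the integrand together with the divergence-free condition to reduce the estimate to a commutator of a singular integral with multiplication by $a$, and then to extract the gain $|\log h|^{1/2}$ from the logarithmically many active scales of the kernel. Throughout I would first justify all manipulations for smooth $a,g$ and pass to the general case by density, using that for fixed $h>0$ the kernel $\nabla K_h$ is bounded.

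First I would record the two elementary kernel facts that drive everything: since $K_h(x)=(|x|+h)^{-d}$ near the origin, one has $|\nabla K_h(x)|\leq d\,(|x|+h)^{-1}K_h(x)$, and, integrating in polar coordinates, $\int_{\T^d}K_h(x)\de x\sim|\log h|$ as $h\to 0$. I would also invoke the classical bound $|a(x)-a(y)|\leq C|x-y|\,(M|\nabla a|(x)+M|\nabla a|(y))$, valid a.e. for $a\in W^{1,p}$, where $M$ is the Hardy--Littlewood maximal operator; combined with the gradient bound this converts the singular weight $\nabla K_h$ into the integrable weight $K_h$ at the cost of the factor $M|\nabla a|(x)+M|\nabla a|(y)$, i.e.\ it removes exactly one order of singularity.

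The key reduction is to kill the diagonal part of $|g(x)-g(y)|^2$. Expanding $|g(x)-g(y)|^2=|g(x)|^2+|g(y)|^2-2g(x)g(y)$, I claim the two square terms contribute nothing. For the term carrying $|g(x)|^2$, the $y$-integral of $\nabla K_h(x-y)\cdot a(x)$ vanishes because $\int_{\T^d}K_h(x-y)\de y$ is constant in $x$, while the $y$-integral of $\nabla K_h(x-y)\cdot a(y)$ vanishes after integrating by parts using $\dive a=0$; the $|g(y)|^2$ term is symmetric. Hence the left-hand side reduces to $-2\int_{\T^d}\int_{\T^d}\nabla K_h(x-y)\cdot(a(x)-a(y))\,g(x)g(y)\,\de x\de y$, which, setting $T_if(y):=\int\partial_iK_h(x-y)f(x)\de x$, equals $-2\sum_i\langle g,[T_i,a_i]g\rangle$: a commutator between the regularised singular integrals $T_i$ and multiplication by the components $a_i$. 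This is the structural reason to hope for better than the naive bound, the commutator kernel being $\partial_iK_h(x-y)(a_i(x)-a_i(y))$, whose difference factor cancels one power of the singularity.

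The main obstacle, and the heart of \cite[Proposition 13]{BJ2}, is to obtain the square-root power $|\log h|^{1/2}$ rather than the full $|\log h|$ produced by a crude estimate: replacing everything by absolute values and using that $\|K_h\star\,\cdot\,\|_{L^q\to L^q}=\|K_h\|_{L^1}\sim|\log h|$ for the positive kernel $K_h$ only yields exponent $1$, so the improvement must come from genuine cancellation and cannot be reached by the triangle inequality. The way I would produce the extra square root is to decompose $K_h=\sum_{2^{-j}\gtrsim h}K_h^{(j)}$ into its $\sim|\log h|$ dyadic annular pieces, each of unit $L^1$ mass: each single-scale commutator $[T_i^{(j)},a_i]$ is bounded with an $h$-independent constant via the pointwise maximal-function bound of the second paragraph (which turns $\partial_iK_h^{(j)}$ into $K_h^{(j)}$), giving $\|\nabla a\|_{L^p}\|g\|_{L^{2p'}}^2$ per scale, while the cross-scale interactions are almost orthogonal and decay geometrically in $|j-j'|$, so a Cauchy--Schwarz over scales replaces the $\ell^1$ sum of $\sim|\log h|$ terms by an $\ell^2$ sum and yields the factor $|\log h|^{1/2}$. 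Once the operator bound $\|\sum_i[T_i,a_i]g\|_{L^{(2p')'}}\lesssim|\log h|^{1/2}\|\nabla a\|_{L^p}\|g\|_{L^{2p'}}$ is in hand, the proof closes by a single Hölder inequality pairing $M|\nabla a|\in L^p$ (bounded for $p>1$, so $\|M|\nabla a|\|_{L^p}\lesssim_{p,d}\|\nabla a\|_{L^p}$) against $|g|^2\in L^{p'}$, reconstructing exactly $\|\nabla a\|_{L^p}\|g\|_{L^{2p'}}^2$; the endpoint $p=1$, where $M$ is unbounded, is handled by the direct kernel estimate using $g\in L^\infty$.
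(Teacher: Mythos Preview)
The paper does not give its own proof of this proposition: it is recalled, with notation adapted to the present setting, from \cite[Proposition~13]{BJ2}. So there is no in-paper argument to compare against beyond that citation.

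Your structural reductions are correct and coincide with the opening moves in \cite{BJ2}: the divergence-free condition does kill the diagonal contributions $|g(x)|^2$ and $|g(y)|^2$ exactly as you describe, leaving the commutator pairing $-2\sum_i\langle g,[T_i,a_i]g\rangle$; and you correctly identify that the crude maximal-function route yields only the exponent $1$ on $|\log h|$, so genuine cancellation is required.

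The gap is in the extraction of $|\log h|^{1/2}$. The claim that ``cross-scale interactions are almost orthogonal and decay geometrically in $|j-j'|$'' is asserted, not argued, and is problematic on two counts. First, Cotlar--Stein almost orthogonality is an $L^2\to L^2$ tool, whereas the mapping you need is $L^{2p'}\to L^{(2p')'}$ with $2p'\geq 4$, so a different framework would have to be set up. Second, and more seriously, if the dyadic commutators really satisfied a Cotlar--Stein condition with geometric off-diagonal decay, the resulting bound on the full sum would be \emph{uniform in $h$}; but the limiting commutator kernel as $h\to 0$ has the critical size $|x-y|^{-d}$ (one order worse than Calder\'on--Zygmund), so a uniform bound is not expected and your heuristic overshoots. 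Likewise, if instead you mean a scalar Cauchy--Schwarz over $\sim|\log h|$ scales, you would need $\sum_j|B_j|^2$ bounded uniformly, which is strictly stronger than the per-scale bound you establish. In short, you have correctly isolated where the difficulty lies but have not supplied the mechanism that produces the square root; that mechanism is precisely the technical content of \cite[Proposition~13]{BJ2}. Your last paragraph also conflates two closings: once an operator bound on $\sum_i[T_i,a_i]$ is available, pairing with $g\in L^{2p'}$ finishes directly, and the ``H\"older pairing of $M|\nabla a|$ against $|g|^2$'' belongs to the crude estimate you have already rejected.
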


We can finally prove our first main result.
\begin{thm}\label{teo:propagazione Lp}
Let $\omega_0\in L^4\cap W^2_{\log,\frac12}(\T^2)$ and let $\omega\in L^\infty([0,T]; L^4(\T^2))$ be any weak solution of \eqref{eq:vort} starting from $\omega_0$. Then, there exists a constant $C>0$ such that $\omega$ satisfies the following bound
\begin{equation}
[\omega(t,\cdot)]_{W^2_{\log,\frac12}}\leq [\omega_0]_{W^2_{\log,\frac12}}+C\sqrt{t}\|\omega_0\|_{L^2}^\frac12\|\omega_0\|_{L^4},\qquad \mbox{for any }t\in [0,T].
\end{equation}
\end{thm}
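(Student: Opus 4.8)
The plan is to control the time evolution of the quantity
\[
D_h(t):=\int_{\T^2}\int_{\T^2}K_h(x-y)\,|\omega(t,x)-\omega(t,y)|^2\de x\de y,\qquad 0<h\le\tfrac12,
\]
because, by the definition in \eqref{seminorme jabin}, one has $[\omega(t,\cdot)]_{W^2_{\log,\frac12}}^2=\sup_{0<h\le1/2}|\log h|^{-1/2}D_h(t)$. The three ingredients will be: the renormalization property of $\omega$, which turns $\frac{d}{dt}D_h$ into a closed expression; the commutator estimate of Proposition~\ref{prop:commutatore bj}, which bounds that expression; and the conservation of $L^p$ norms together with the Calderón--Zygmund bound $\|\nabla u(s,\cdot)\|_{L^p}\lesssim_p\|\omega(s,\cdot)\|_{L^p}$ already used in \eqref{stima biot savart}. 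It is worth observing that $L^4$ is exactly the natural threshold here: the scaling of $W^2_{\log,\frac12}$ forces the exponent $p=2$ in Proposition~\ref{prop:commutatore bj}, whence the function played against the kernel must lie in $L^{2p'}=L^4$.

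\textbf{Step 1 (evolution identity).} First I would record that $\omega\in L^\infty([0,T];L^4(\T^2))$ implies, via Biot--Savart, $\nabla u\in L^\infty([0,T];L^4(\T^2))$ and hence $u\in L^\infty([0,T]\times\T^2)$ by Sobolev embedding in two dimensions. By \cite{FMN} such a solution is renormalized in the sense of DiPerna--Lions; tensorizing to the doubled torus, the function $(t,x,y)\mapsto|\omega(t,x)-\omega(t,y)|^2$ is then a distributional solution of the transport equation on $[0,T]\times\T^2\times\T^2$ driven by the divergence-free field $(x,y)\mapsto(u(t,x),u(t,y))$. Testing this equation against $K_h(x-y)$ — after mollifying $K_h$, using $\omega\in L^\infty_tL^4$ and $u\in L^\infty$ to pass to the limit, and using $\nabla_{(x,y)}K_h(x-y)=(\nabla K_h(x-y),-\nabla K_h(x-y))$ — yields, for the absolutely continuous representative of $t\mapsto D_h(t)$,
\[
D_h(t)=D_h(0)+\int_0^t\int_{\T^2}\int_{\T^2}\nabla K_h(x-y)\cdot\big(u(s,x)-u(s,y)\big)\,|\omega(s,x)-\omega(s,y)|^2\de x\de y\de s .
\]

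\textbf{Step 2 (commutator bound).} For a.e.\ $s$ the field $u(s,\cdot)$ is divergence-free and belongs to $W^{1,2}(\T^2)$, while $\omega(s,\cdot)\in L^4(\T^2)$. Applying Proposition~\ref{prop:commutatore bj} with $a=u(s,\cdot)$, $p=2$ (so $p'=2$, $2p'=4$) and $g=\omega(s,\cdot)$ bounds the inner space integral above by $C|\log h|^{1/2}\|\nabla u(s,\cdot)\|_{L^2}\|\omega(s,\cdot)\|_{L^4}^2$. I would then use \eqref{stima biot savart} with $p=2$ to replace $\|\nabla u(s,\cdot)\|_{L^2}$ by $\|\omega(s,\cdot)\|_{L^2}$, and the conservation of the $L^2$ and $L^4$ norms — itself a consequence of renormalization, since $u\in L^\infty$ makes $\beta(\omega)=|\omega|^2$ and $\beta(\omega)=|\omega|^4$ admissible — to get $\|\omega(s,\cdot)\|_{L^2}=\|\omega_0\|_{L^2}$ and $\|\omega(s,\cdot)\|_{L^4}=\|\omega_0\|_{L^4}$ for all $s\in[0,T]$. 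Inserting these into Step~1 gives
\[
D_h(t)\le D_h(0)+C\,t\,|\log h|^{1/2}\,\|\omega_0\|_{L^2}\|\omega_0\|_{L^4}^2,\qquad 0<h\le\tfrac12 .
\]

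\textbf{Step 3 (conclusion and main obstacle).} Dividing by $|\log h|^{1/2}$, taking the supremum over $0<h\le1/2$, and using $\sqrt{a+b}\le\sqrt a+\sqrt b$ for $a,b\ge0$ yields
\[
[\omega(t,\cdot)]_{W^2_{\log,\frac12}}\le[\omega_0]_{W^2_{\log,\frac12}}+C\sqrt t\,\|\omega_0\|_{L^2}^{1/2}\|\omega_0\|_{L^4},
\]
which is the asserted bound. The hard part will be Step~1: making the evolution identity rigorous for a merely weak solution requires the renormalization result of \cite{FMN}, the tensorization of renormalized solutions to $\T^2\times\T^2$, and an approximation argument to cope with the limited smoothness of $K_h$ near the origin; one must also check that the available integrability ($u\in L^\infty$ and $|\omega(t,x)-\omega(t,y)|^2\in L^\infty_tL^2(\T^2\times\T^2)$, with $\nabla K_h\in L^\infty$) makes every integral above absolutely convergent and the map $t\mapsto D_h(t)$ absolutely continuous with the stated derivative. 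Once this is in place, the remainder is just Proposition~\ref{prop:commutatore bj} plus conservation of $L^p$ norms.
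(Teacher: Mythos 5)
Your proposal is correct and follows essentially the same route as the paper: doubling of variables tested against the kernel $K_h$, the commutator estimate of Proposition \ref{prop:commutatore bj} with $p=p'=2$ (hence the $L^4$ threshold), the Calder\'on--Zygmund bound together with conservation of the $L^2$ and $L^4$ norms via the renormalization result of \cite{FMN}, and finally division by $|\log h|^{1/2}$ and the supremum over $h$. The only organizational difference is in your Step 1: the paper mollifies the solution, $\omega^\varepsilon=\omega*\rho^\varepsilon$, and controls the DiPerna--Lions commutator $r^\varepsilon\to 0$ in $L^1([0,T];L^2)$ before passing to the limit, whereas you tensorize the renormalization property and mollify the kernel instead; both devices rigorize the same evolution identity.
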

\begin{rem}
The constant in the statement of Theorem \eqref{teo:propagazione Lp} depends depends on the constant in \eqref{stima che mi serve} of Proposition \ref{prop:commutatore bj} with $d=2$ and $p=p'=2$.
\end{rem}
\begin{proof}
%{\color{red}\underline{\em Sketch} Let $\omega\in L^\infty([0,T]; L^4(\T^2))$ be a weak solution of $\eqref{eq:vort}$. A suitable regularization argument allow us to show that, under our integrability assumptions, the following equation holds
%\begin{align*}
%\frac{\de}{\de t}\int_{\T^{2}}\int_{\T^{2}} & K_h(x-y)|\omega(t,x)-\omega(t,y)|^2\de x\de y\\
%&=\int_{\T^{2}}\int_{\T^{2}} \nabla K_h(x-y)(u(t,x)-u(t,y))|\omega(t,x)-\omega(t,y)|^2\de x\de y.
%\end{align*}
%Thus, the conclusion follows by integrating in time and applying the commutator estimate in Proposition \ref{prop:commutatore bj}.}
Let $\omega\in L^\infty([0,T]; L^4(\T^2))$ be a weak solution of $\eqref{eq:vort}$ and let $\rho^\e$ be a family of smooth mollifiers. Then, the function $\omega^\e=\omega*\rho^\e$ satisfies the equation
\begin{equation}\label{equazione mollificata}
\begin{cases}
\partial_t \omega^\e+u\cdot\nabla\omega^\e=r^\e,\\
u(t,x)=K*\omega(t,\cdot)(x),\\
\omega^\e(0,\cdot)=\omega_0*\rho^\e,
\end{cases}
\end{equation}
where the commutator $r^\e$ is defined as
\begin{equation}
r^\e:=u\cdot\nabla (\omega*\rho^\e)-\left(u\cdot\nabla \omega\right)*\rho^\e.
\end{equation}
In particular, since $\omega,\nabla u\in L^\infty([0,T];L^4(\T^2))$ we have that $r^\e\to 0$ in $L^1([0,T];L^2(\T^2))$, see \cite[Lemma II.1]{DPL}.
Thus, by using the equation \eqref{equazione mollificata}, the function $|\omega^\e(t,x)-\omega^\e(t,y)|^2$ satisfies the equation
\begin{align}
\partial_t |\omega^\e(t,x)-\omega^\e(t,y)|^2+[u(t,x)\cdot\nabla_x+u(t,y)\cdot\nabla_y]|\omega^\e(t,x)-\omega^\e(t,y)|^2\nonumber\\
=2(r^\e(t,x)+r^\e(t,y))(\omega^\e(t,x)-\omega^\e(t,y)).\label{equazione differenza}
\end{align}
Then, we use \eqref{equazione differenza} and $\dive u=0$ to compute
\begin{align*}
\frac{\de}{\de t}\int_{\T^{2}}\int_{\T^{2}} & K_h(x-y)|\omega^\e(t,x)-\omega^\e(t,y)|^2\de x\de y\\
&=\int_{\T^{2}}\int_{\T^{2}} \nabla K_h(x-y)(u(t,x)-u(t,y))|\omega^\e(t,x)-\omega^\e(t,y)|^2\de x\de y\\
&+2\int_{\T^{2}}\int_{\T^{2}} K_h(x-y)(r^\e(t,x)+r^\e(t,y))(\omega^\e(t,x)-\omega^\e(t,y))\de x\de y.
\end{align*}
We integrate in time and we use Proposition \ref{prop:commutatore bj} to obtain that 
\begin{align*}
\int_{\T^{2}}\int_{\T^{2}} & K_h(x-y)|\omega^\e(t,x)-\omega^\e(t,y)|^2\de x\de y\leq \int_{\T^{2}}\int_{\T^{2}} K_h(x-y)|\omega^\e_0(x)-\omega^\e_0(y)|^2\de x\de y\\
&+2\int_0^T\int_{\T^{2}}\int_{\T^{2}} K_h(x-y)(r^\e(t,x)+r^\e(t,y))(\omega^\e(t,x)-\omega^\e(t,y))\de x\de y\de t\\
&+C t |\log h|^{\frac12}\|\nabla u\|_{L^\infty L^2}\|\omega^\e\|_{L^\infty L^4}^2,
\end{align*}
for all $0\leq t\leq T$. Since any weak solution $\omega\in L^\infty([0,T]; L^p(\T^2))$ with $p\geq 2$ is renormalized (see \cite{FMN}), the following bounds hold
\begin{align*}
\|\omega^\e\|_{L^\infty L^4}&\leq \|\omega_0\|_{L^4},\\
\|\nabla u\|_{L^\infty L^2}&\leq C \|\omega\|_{L^\infty L^2}\leq C\|\omega_0\|_{L^2},
\end{align*}
and then we get that 
\begin{align*}
\int_{\T^{2}}\int_{\T^{2}} & K_h(x-y)|\omega^\e(t,x)-\omega^\e(t,y)|^2\de x\de y\leq \int_{\T^{2}}\int_{\T^{2}} K_h(x-y)|\omega^\e_0(x)-\omega^\e_0(y)|^2\de x\de y\\
&+2\int_0^T\int_{\T^{2}}\int_{\T^{2}} K_h(x-y)(r^\e(t,x)+r^\e(t,y))(\omega^\e(t,x)-\omega^\e(t,y))\de x\de y\de t\\
&+C t |\log h|^{\frac12}\|\omega_0\|_{L^2}\|\omega_0\|_{L^4}^2.
\end{align*}
Finally, the convergence of the commutator $r^\e$ allows us to take the limit $\e\to 0$ obtaining that
\begin{align*}
\int_{\T^{2}}\int_{\T^{2}} K_h(x-y)|&\omega(t,x)-\omega(t,y)|^2\de x\de y\\
&\leq \int_{\T^{2}}\int_{\T^{2}} K_h(x-y)|\omega_0(x)-\omega_0(y)|^2\de x\de y+C t |\log h|^{\frac12}\|\omega_0\|_{L^2}\|\omega_0\|_{L^4}^2,
\end{align*}
and the definition of the semi-norm $[\,\cdot\,]_{\frac12}$ in \eqref{seminorme jabin} implies that
\begin{equation}
[\omega(t,\cdot)]_{\frac12}^2\leq [\omega_0]_{\frac12}^2+C t\|\omega_0\|_{L^2}\|\omega_0\|_{L^4}^2.
\end{equation}
The proof is complete.
\end{proof}

\section{Vanishing viscosity limit}
In this last section we study the inviscid limit of the 2D Navier-Stokes equations \eqref{eq:vort-ns-intro}. Our goal is to provide a logarithmic rate of convergence (in the viscosity) assuming that the initial vorticity belongs to the space $H^{\log,\alpha}$.\\

We start by introducing the Stochastic Langrangian representation of \eqref{eq:vort-ns-intro}. Let $(\Omega, \mathcal{F}, \mathbb{P})$ be a given probability space, we define the map $X^\nu:[0,T]\times[0,T]\times \T^2\times\Omega\to\T^2$ as follows. For $\mathbb{P}$-a.e. $\xi\in\Omega$ and for any $t\in(0,T)$ and any $s\in[0,T]$ we consider a $\T^2$-valued Brownian motion $W_s$ adapted to the backward filtration, i.e. satisfying $W_t=0$. The map $s\mapsto X^{\nu}_{t,s}(x,\xi)$ is obtained by solving 
\begin{align}
&\begin{cases}
\de X^\nu_{t,s}(x,\xi)=u^\nu(s,X^\nu_{t,s}(x,\xi))\de s + \sqrt{2\nu} \de W_s(\xi),\hspace{0.5cm}s\in[0,t),\\
X^\nu_{t,t}(x,\xi)=x.
\end{cases}\label{eq:nsl1}
\end{align}
For $\mathbb{P}$-a.e. $\xi\in\Omega$ the map $x\in\T^2\mapsto X^{\nu}_{t,s}(x,\xi)\in\T^2$ is measure-preserving for any $t\in[0,T]$ and $s\in[0,t]$ (see \cite{LBL}). Moreover, by the Feynman-Kac formula (see \cite{K, LBL}), the function
\begin{equation}
\omv(t,x)=\mathbb{E}[\omv_0(X_{t,0}^\nu(x))]
\end{equation}
satisfies the advection-diffusion equation
\begin{equation*}
\partial_{t}\omv+\uv\cdot\nabla\omv-\nu\Delta\omv=0,
\end{equation*}
with initial datum $\omv_0$, where we have denoted by $\mathbb{E}[f]$ the expectation, i.e. the average with respect to $\mathbb{P}$. As usual, we will omit the explicit dependence on the parameter $\xi\in\Omega$. Therefore, the couple
\begin{align}
&\,\,\uv(t,x):=\nabla^\perp(-\Delta)^{-1}\omv(t,\cdot)(x),\label{eq:nsl2}\\
&\,\,\omv(t,x):=\mathbb{E}[\omega^{\nu}_0(X^{\nu}_{t,0}(x))]\label{eq:nsl3},
\end{align}
solves the Cauchy problem for the Navier-Stokes equations \eqref{eq:vort-ns-intro}. The couple $(\uv,\omv)$ defined by the equations \eqref{eq:nsl2} and \eqref{eq:nsl3} is the {\em Lagrangian representation} of solutions to \eqref{eq:vort-ns-intro}.\par
 
We remark that the probability space and the Brownian motion can be arbitrarily chosen. Thus, the Lagrangian representation does not depend on the probability space. Indeed, since $\uv$ is a smooth function, the equation \eqref{eq:nsl1} is satisfied in the strong sense \cite{K, LBL}, namely one can find a solution $X^{\nu}_{t,\cdot}$ to \eqref{eq:nsl1} on any given filtered probability space with any given adapted Brownian motions as described above.\\

Finally, we recall the following theorem proved in \cite[Theorem 2.8]{CCS4}.

\begin{thm}\label{convergenza flussi}
Let $\omega_0\in L^\infty(\T^2)$ with $\|\omega_0\|_{L^\infty}=M$. Let $(u,\omega)$ and $(u^\nu,\omega^\nu)$ be, respectively, the unique bounded solutions of the Euler and Navier-Stokes equations with the same initial datum $\omega_0$. Denote with $X$ and $X^\nu$ the corresponding deterministic and stochastic flows. Then, for any $T>0$ there exists a constant $\beta(M,T)$ such that
\begin{equation}\label{rate flusso CCS4}
\sup_{s,t\in[0,T]} \mathbb{E}\left[\int_{\T^2}|X^\nu_{t,s}(x)-X_{t,s}(x)|^2\de x\right]\leq C\nu^{\beta(M,T)}.
\end{equation}
\end{thm}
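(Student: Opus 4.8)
The plan is to run a stability estimate directly on the two flows, using Chemin's velocity rate \eqref{rate chemin} as the only external input together with the log-Lipschitz regularity of the limiting Euler velocity $u$. Fix $t\in[0,T]$ and set
\[
Y_{t,s}(x):=X^\nu_{t,s}(x)-X_{t,s}(x),\qquad Q(s):=\mathbb{E}\int_{\T^2}|Y_{t,s}(x)|^2\de x,
\]
so that $Q(t)=0$. Since $X^\nu$ solves the (backward) stochastic equation \eqref{eq:nsl1} while $X$ solves the corresponding ODE, the difference $Y_{t,s}$ satisfies an SDE whose drift is $u^\nu(s,X^\nu_{t,s})-u(s,X_{t,s})$ and whose noise $\sqrt{2\nu}\,\de W_s$ is present only on the $X^\nu$-component. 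Applying It\^o's formula to $|Y_{t,s}|^2$, integrating over $\T^2$ and taking expectation, the martingale term vanishes in mean and the second-order It\^o term contributes a harmless amount $O(\nu)$ per unit time; thus, writing $\sigma:=|t-s|$ and regarding $Q$ as a function of $\sigma$, one obtains
\[
\frac{\de}{\de\sigma}Q\le 2\,\mathbb{E}\int_{\T^2}\big|Y_{t,s}\big|\,\big|u^\nu(s,X^\nu_{t,s})-u(s,X_{t,s})\big|\de x+C\nu .
\]

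The next step is to split the velocity difference as $u^\nu(s,X^\nu_{t,s})-u(s,X_{t,s})=(u^\nu-u)(s,X^\nu_{t,s})+\big(u(s,X^\nu_{t,s})-u(s,X_{t,s})\big)$. For the first term I use that $x\mapsto X^\nu_{t,s}(x,\xi)$ is measure-preserving (as recalled after \eqref{eq:nsl1}), so that $\mathbb{E}\int_{\T^2}|(u^\nu-u)(s,X^\nu_{t,s})|^2\de x=\|u^\nu(s,\cdot)-u(s,\cdot)\|_{L^2}^2$, which is controlled by Chemin's rate \eqref{rate chemin}; after a Young inequality this contributes $Q(s)+C\nu^{\exp(-CTM)}$. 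For the second term I use that, since $\|\omega(s,\cdot)\|_{L^\infty}=M$ is conserved, $u(s,\cdot)$ is log-Lipschitz with constant $\sim M$, whence $|u(s,X^\nu_{t,s})-u(s,X_{t,s})|\le CM\,|Y_{t,s}|\,(1-\log|Y_{t,s}|)$. Therefore $|Y_{t,s}|\,|u(s,X^\nu_{t,s})-u(s,X_{t,s})|\le CM\,g(|Y_{t,s}|^2)$ with $g(z)=z(1-\tfrac12\log z)$, which is concave; applying Jensen's inequality first in $x$ (on the unit-measure torus) and then in $\xi$ yields $\mathbb{E}\int_{\T^2} g(|Y_{t,s}|^2)\de x\le g(Q(s))$.

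Collecting the two contributions and absorbing the linear term into the modulus, $Q=R(\sigma)$ satisfies the Osgood inequality
\[
\frac{\de R}{\de\sigma}\le C_1 M\,\mu(R)+B,\qquad \mu(R):=R(1-\log R),\qquad R(0)=0,
\]
with $B=C\nu^{\exp(-CTM)}$ (the dominant error, since $\nu^{\exp(-CTM)}\gg\nu$ as $\nu\to0$). To integrate this in the presence of the additive error, I compare $S:=R+B$ with the homogeneous equation: since $B\le\mu(B)\le\mu(S)$, one has $\de S/\de\sigma\le (C_1M+1)\mu(S)$ with $S(0)=B$, which integrates explicitly to $S(\sigma)\le e\,B^{\exp(-(C_1M+1)\sigma)}$. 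Evaluating at $\sigma=|t-s|\le T$ gives $Q(s)\le R(\sigma)\le C\,\nu^{\beta(M,T)}$ with $\beta(M,T)=\exp(-CTM)\exp(-(C_1M+1)T)>0$; taking the supremum over $s,t\in[0,T]$ yields \eqref{rate flusso CCS4}.

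The main obstacle is precisely this Osgood step: one must turn the pointwise log-Lipschitz bound into a closed scalar inequality through the concavity/Jensen argument, and then integrate the resulting inequality with the additive error $B$ so as to extract the polynomial-in-$\nu$ rate with the correct exponentially small exponent. It is essential here that the splitting loads the log-Lipschitz requirement onto the limiting Euler velocity $u$ and not onto $u^\nu$, since only $u$ has a modulus controlled uniformly in $\nu$.
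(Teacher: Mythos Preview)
The paper does not actually prove this theorem: it is quoted verbatim from \cite[Theorem~2.8]{CCS4} and used as a black box in the proof of Theorem~\ref{thm:main2}. So there is no ``paper's own proof'' to compare against.

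That said, your sketch is essentially the argument of \cite{CCS4}: apply It\^o's formula to $|X^\nu_{t,s}-X_{t,s}|^2$, split the drift as $(u^\nu-u)(s,X^\nu)+\big(u(s,X^\nu)-u(s,X)\big)$, feed Chemin's $L^2$ rate \eqref{rate chemin} into the first piece via the measure-preserving property of $X^\nu_{t,s}$, use the log-Lipschitz modulus of the Euler velocity on the second piece, and close with an Osgood/Gr\"onwall lemma for $\mu(r)=r(1-\log r)$. Your Jensen step with the concave $g(z)=z(1-\tfrac12\log z)$ is the clean way to pass from the pointwise log-Lipschitz bound to a scalar inequality for $Q$, and your treatment of the additive error $B$ by comparing $S=R+B$ with the homogeneous Osgood ODE is correct and yields the doubly-exponential exponent $\beta(M,T)$.

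Two small points to tighten if you write this up in full. First, the log-Lipschitz inequality $|u(s,x)-u(s,y)|\le CM\,|x-y|(1-\log|x-y|)$ and the concavity/monotonicity of $\mu$ are only used on the range $|x-y|\le 1$ (equivalently $z\le 1$); on $\T^2$ the diameter is bounded, and $Q$ is a priori bounded by that diameter squared, so a harmless rescaling makes the whole argument live where $\mu$ is increasing and concave. Second, in the backward-in-time setting the sign bookkeeping in It\^o's formula deserves one explicit line; your reparametrization by $\sigma=|t-s|$ handles it, but it is worth stating that the quadratic-variation term contributes $+4\nu$ (in $d=2$) per unit $\sigma$, which is exactly the $C\nu$ you wrote. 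Neither point is a genuine gap.
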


We can now prove our second main result, which we rewrite for the reader's convenience.
\begin{thm}
Let $\omega_0\in L^\infty\cap H^{\log,\alpha}(\T^2)$ for some $\alpha>0$. Let $\omega$ and $\omega^\nu$ be, respectively, the unique bounded solutions of the Euler and Navier-Stokes equations arising from $\omega_0$. Then, there exists a constant $C>0$ depending on $\alpha,T,\|\omega_0\|_{H^{\log,\alpha}}$, and $\|\omega_0\|_{L^\infty}$ such that
\begin{equation}
\sup_{t\in(0,T)}\|\omega^\nu(t,\cdot)-\omega(t,\cdot)\|_{L^2}\leq \frac{C}{|\log \nu|^{\alpha/2}}.
\end{equation}
\end{thm}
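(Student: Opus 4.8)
The plan is to exploit the two Lagrangian representations of the solutions together with a mollification of the initial datum at a scale that is then balanced against $\nu$. Since $\omega_0\in L^\infty$, the unique bounded Euler solution is Lagrangian, $\omega(t,\cdot)=\omega_0\circ X_{t,0}$ with $x\mapsto X_{t,0}(x)$ measure-preserving (as $\dive u=0$), while by \eqref{eq:nsl3} one has $\omega^\nu(t,x)=\mathbb{E}[\omega_0(X^\nu_{t,0}(x))]$ with $x\mapsto X^\nu_{t,0}(x,\xi)$ measure-preserving for $\mathbb{P}$-a.e.\ $\xi$. Fix $\sigma\in(0,1/36)$, let $\rho^\sigma$ be a standard mollifier supported in $B_\sigma$, and set $\omega_0^\sigma:=\omega_0*\rho^\sigma$. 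Decomposing
\begin{equation*}
\omega_0\circ X^\nu_{t,0}-\omega_0\circ X_{t,0}=(\omega_0-\omega_0^\sigma)\circ X^\nu_{t,0}\;+\;\big(\omega_0^\sigma\circ X^\nu_{t,0}-\omega_0^\sigma\circ X_{t,0}\big)\;+\;(\omega_0^\sigma-\omega_0)\circ X_{t,0},
\end{equation*}
taking the expectation and then the $L^2(\T^2)$ norm in $x$, and using Jensen's and Minkowski's inequalities, reduces the proof to controlling three contributions.

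By the measure-preserving property, the first and third contributions both reduce to $\|\omega_0-\omega_0^\sigma\|_{L^2}$, a purely Eulerian quantity. To estimate it I would use Theorem \ref{teo:diff-quot}: for $|h|\le\sigma<1/36$ we have $\log(1/|h|)^{-\alpha/2}\le\log(1/\sigma)^{-\alpha/2}$, hence
\begin{equation*}
|\omega_0(x)-\omega_0^\sigma(x)|\le\int\rho^\sigma(h)\,|\omega_0(x)-\omega_0(x-h)|\,\de h\le C(\alpha)\log(1/\sigma)^{-\alpha/2}\big(L_\alpha\omega_0(x)+(\rho^\sigma* L_\alpha\omega_0)(x)\big),
\end{equation*}
and Young's inequality together with $\|L_\alpha\omega_0\|_{L^2}=[\omega_0]_{H^{\log,\alpha}}$ give $\|\omega_0-\omega_0^\sigma\|_{L^2}\le C(\alpha)\log(1/\sigma)^{-\alpha/2}[\omega_0]_{H^{\log,\alpha}}$.

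For the second contribution, $\|\nabla\omega_0^\sigma\|_{L^\infty}\le C\sigma^{-1}\|\omega_0\|_{L^\infty}$, so $|\omega_0^\sigma\circ X^\nu_{t,0}-\omega_0^\sigma\circ X_{t,0}|\le C\sigma^{-1}\|\omega_0\|_{L^\infty}|X^\nu_{t,0}-X_{t,0}|$, and Theorem \ref{convergenza flussi} (with $M=\|\omega_0\|_{L^\infty}$) yields
\begin{equation*}
\sup_{t\in(0,T)}\mathbb{E}\int_{\T^2}|\omega_0^\sigma(X^\nu_{t,0}(x))-\omega_0^\sigma(X_{t,0}(x))|^2\de x\le C\sigma^{-2}\|\omega_0\|_{L^\infty}^2\,\nu^{\beta(M,T)}.
\end{equation*}
Collecting the three bounds,
\begin{equation*}
\sup_{t\in(0,T)}\|\omega^\nu(t,\cdot)-\omega(t,\cdot)\|_{L^2}\le C(\alpha)\log(1/\sigma)^{-\alpha/2}[\omega_0]_{H^{\log,\alpha}}+C\sigma^{-1}\|\omega_0\|_{L^\infty}\nu^{\beta(M,T)/2},
\end{equation*}
and choosing $\sigma=\nu^{\beta(M,T)/4}$ (which lies in $(0,1/36)$ for $\nu$ small) makes the second term $C\|\omega_0\|_{L^\infty}\nu^{\beta(M,T)/4}$, dominated by $|\log\nu|^{-\alpha/2}$ for $\nu$ small, while $\log(1/\sigma)^{-\alpha/2}=(4/\beta(M,T))^{\alpha/2}|\log\nu|^{-\alpha/2}$. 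For $\nu$ bounded away from $0$ the bound \eqref{rate log} holds trivially after enlarging $C$; this completes the argument.

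The point to be careful about — and the reason for mollifying — is that one should not feed the pair $(X^\nu_{t,0}(x),X_{t,0}(x))$ directly into Theorem \ref{teo:diff-quot}: that would produce the random, $x$-dependent weight $\log(1/|X^\nu_{t,0}(x)-X_{t,0}(x)|)^{-\alpha}$ multiplied by $L_\alpha\omega_0$ evaluated along the flows, and this product cannot be estimated, with the correct power $\alpha$ of the logarithm, knowing only $L_\alpha\omega_0\in L^2$ and $\mathbb{E}\int|X^\nu_{t,0}-X_{t,0}|^2\,\de x\lesssim\nu^{\beta}$ — any Hölder split either pushes the logarithmic weight into $L^\infty$ (losing the rate) or requires $L_\alpha\omega_0\in L^p$ with $p>2$, which is not available for $\omega_0\in L^\infty\cap H^{\log,\alpha}$. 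Introducing the mollification circumvents this: Theorem \ref{teo:diff-quot} is then applied only to translations of size $\le\sigma$, measure-preservation turns the rough part into the clean quantity $\|\omega_0-\omega_0^\sigma\|_{L^2}$, and the smooth part is handled by the polynomial-in-$\nu$ convergence of the stochastic flows in Theorem \ref{convergenza flussi}; the final optimization in $\sigma$ is then routine. The resulting rate is exactly the modulus of continuity $\sim|\log r|^{-\alpha/2}$ of $H^{\log,\alpha}$ functions in $L^2$, evaluated at a power of $\nu$.
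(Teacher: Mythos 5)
Your argument is correct, and it reaches the paper's rate with the same two key ingredients (the difference-quotient estimate of Theorem \ref{teo:diff-quot} and the flow convergence of Theorem \ref{convergenza flussi}, balanced via a scale $\sim\nu^{\beta(M,T)/4}$), but through a genuinely different decomposition. The paper does not mollify: it writes $\omega^\nu-\omega$ via the stochastic representation and splits the $(x,\xi)$-integral over the events $\{|X^\nu_{t,0}-X_{t,0}|\le\e\}$ and $\{|X^\nu_{t,0}-X_{t,0}|>\e\}$. On the first event it applies Theorem \ref{teo:diff-quot} \emph{directly} to the pair $(X^\nu_{t,0}(x),X_{t,0}(x))$; the random weight $\log(1/|X^\nu_{t,0}-X_{t,0}|)^{-\alpha/2}$ is bounded deterministically by $|\log\e|^{-\alpha/2}$ precisely because the event forces the two points to be $\e$-close, and the remaining factor $L_\alpha\omega_0(X^\nu_{t,0})+L_\alpha\omega_0(X_{t,0})$ is handled in $L^2$ by measure preservation and \eqref{norma=seminorma}. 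The complementary event is controlled by $\|\omega_0\|_{L^\infty}$, Chebyshev, and \eqref{rate flusso CCS4}, giving $\e^{-2}\nu^{\beta}$, and the choice $\e=\nu^{\beta/4}$ closes the argument. So the obstruction you describe in your final paragraph (that feeding the flow pair into Theorem \ref{teo:diff-quot} produces an unmanageable random logarithmic weight) is not actually an obstruction: restricting to the good event makes that weight deterministic, and no Hölder splitting or higher integrability of $L_\alpha\omega_0$ is needed. Your mollification route trades this event splitting for the triangle decomposition $\omega_0-\omega_0^\sigma$ plus the Lipschitz bound $\|\nabla\omega_0^\sigma\|_{L^\infty}\lesssim\sigma^{-1}\|\omega_0\|_{L^\infty}$; it is slightly longer but equally valid, it isolates the purely Eulerian quantity $\|\omega_0-\omega_0^\sigma\|_{L^2}\lesssim|\log\sigma|^{-\alpha/2}[\omega_0]_{H^{\log,\alpha}}$ (a reusable estimate in the spirit of the modulus-of-continuity approach of \cite{CCS4}), and, as in the paper, it only uses $\omega_0\in L^\infty$ through the far-field/large-discrepancy term, here via the $\sigma^{-1}$ Lipschitz constant rather than via Chebyshev. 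Both proofs require $\nu$ small so that the chosen scale is below $1/36$, with the regime of $\nu$ bounded away from zero absorbed into the constant, as you note.
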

\begin{proof}
Let $\e>0$ be a parameter that we will fix later. We use the Feynman-Kac formula to write
\begin{align*}
\|\omega^\nu(t,\cdot)-\omega(t,\cdot)\|_{L^2}^2&=\int_{\T^2}|\omega^\nu(t,x)-\omega(t,x)|^2\de x \\
&=\int_{\T^2}|\mathbb{E}[\omega_0(X^\nu_{t,0})]-\omega_0(X_{t,0})|^2\de x\\
&\leq \iint_{\{|X^\nu_{t,0}-X_{t,0}|\leq\e\}}|\omega_0(X^\nu_{t,0})-\omega_0(X_{t,0})|^2\de \mathbb{P}\de x\\
&+\iint_{\{|X^\nu_{t,0}-X_{t,0}|>\e\}}|\omega_0(X^\nu_{t,0})-\omega_0(X_{t,0})|^2 \de \mathbb{P}\de x\\
&:=I+II.
\end{align*}
We start by estimating $I$: if we assume that $\e<1/36$, we apply Theorem \ref{teo:diff-quot} and we have that
\begin{align*}
\iint_{\{|X^\nu_{t,0}-X_{t,0}|\leq\e\}}|\omega_0(X^\nu_{t,0})-\omega_0(X_{t,0})|^2\de \mathbb{P}\de x&\leq \frac{C(\alpha)}{|\log\e|^{\alpha}}\mathbb{E}\left[\int_{\T^2}\left[L_{\alpha}\,\omega_0(X^\nu_{t,0})^2+L_{\alpha}\,\omega_0(X_{t,0})^2\right]\de x\right]\\
&\leq\frac{C(\alpha)}{|\log \e|^{\alpha}}[\omega_0]_{H^{\log,\alpha}}^2, 
\end{align*}
where in the last line we used \eqref{norma=seminorma} and the measure preserving property of $X^\nu$ and $X$. To estimate $II$ we use the fact that $\omega_0$ is bounded, Chebishev’s inequality and the convergence of the flows in Theorem \ref{convergenza flussi} to obtain that
\begin{align*}
\iint_{\{|X^\nu_{t,0}-X_{t,0}|>\e\}}|\omega_0(X^\nu_{t,0})-\omega_0(X_{t,0})|^2 \de \mathbb{P}\de x&\leq \frac{C\|\omega_0\|_{L^\infty}^2}{\e^2}\mathbb{E}\left[\int_{\T^2}|X^\nu_{t,0}(x)-X_{t,0}(x)|^2\de x\right]\\
&\leq \frac{C\|\omega_0\|_{L^\infty}^2}{\e^2}\nu^{\beta(M,T)}.
\end{align*}
Thus, by defining $\e:=\nu^{\beta(M,T)/4}$, we finally get 
\begin{align*}
\|\omega^\nu(t,\cdot)-\omega(t,\cdot)\|_{L^2}^2&\leq \frac{C(\alpha)[\omega_0]_{H^{\log,\alpha}}^2}{|\log \e|^{\alpha}}+C\|\omega_0\|_{L^\infty}^2\nu^{\beta(M,T)/2}\\
&=\frac{C(\alpha)\alpha(M,T)^{-\alpha}[\omega_0]_{H^{\log,\alpha}}^2}{|\log \nu|^{\alpha}}+C\|\omega_0\|_{L^\infty}^2\nu^{\beta(M,T)/2}\\
&\leq \frac{C(\alpha,T,\|\omega_0\|_{H^{\log,\alpha}},\|\omega\|_{L^\infty})}{|\log \nu|^{\alpha}},
\end{align*}
where in the last line we used that the logarithm converges slower than any power. This concludes the proof.
\end{proof}

\begin{rem}
An easy interpolation argument implies the convergence of $\omega^\nu$ towards $\omega$ in all $L^q$ spaces with $1\leq q<\infty$ and rate
\begin{equation}
\sup_{t\in(0,T)}\|\omega^\nu(t,\cdot)-\omega(t,\cdot)\|_{L^q}\leq \frac{C}{|\log \nu|^{f(\alpha,q)}},\qquad \mbox{with }f(\alpha,q):=\min\left\{\frac{\alpha}{2}, \frac{\alpha}{q} \right\}.
\end{equation}
\end{rem}

\subsection*{Acknowledgments} During the preparation of this manuscript, G. Ciampa has been supported by the ERC STARTING GRANT 2021 ``Hamiltonian Dynamics, Normal Forms and Water Waves" (HamDyWWa), Project Number: 101039762. G. Crippa is supported by the Swiss National Science Foundation through the project 212573 FLUTURA (Fluids, Turbulence, Advection) and by the SPP 2410 ``Hyperbolic Balance Laws in Fluid Mechanics: Complexity, Scales, Randomness (CoScaRa)'' funded by the Deutsche Forschungsgemeinschaft (DFG, German Research Foundation) through the project 200021E\_217527 funded by the Swiss National Science Foundation. The work of G. Ciampa and S. Spirito is partially supported by INdAM-GNAMPA, by the projects PRIN 2020 ``Nonlinear evolution PDEs, fluid dynamics and transport equations: theoretical foundations and applications”, PRIN2022
``Classical equations of compressible fluids mechanics: existence and properties of non-classical solutions'', and PRIN2022-PNRR ``Some mathematical approaches to climate change and its impacts.'' Views and opinions expressed are however those of the authors only and do not necessarily reflect those of the European Union or the European Research Council. Neither the European Union nor the granting authority can be held responsible for them.

The authors are grateful to Rapha\"el Danchin and to \'Oscar Dom\'inguez  for helpful discussions on the topics of this work.

\end{document}